\documentclass[pdflatex,sn-mathphys-num]{sn-jnl}% Math and Physical Sciences Numbered Reference Style
\usepackage{amsmath}
\usepackage{amssymb}
\usepackage{enumerate}
\usepackage{algorithm,algorithmic}
\usepackage{caption}
\usepackage{array}
\usepackage{pgfplots, pgfplotstable, booktabs, colortbl, array}
\usepackage{multirow}

\usepackage{float}
\floatstyle{plaintop}
\restylefloat{table}

\usepackage{todonotes}

\newcommand{\ignore}[1]{}

\usepackage{amsfonts}
\usepackage{amssymb}

\newcommand{\norm}[1]{\|#1\|}
\newcommand{\abs}[1]{|#1|}

\newcommand{\uu}{{\bf u}}


\newtheorem{theorem}{Theorem}[section]
\newtheorem{lemma}[theorem]{Lemma}

\newtheorem{assumption}[theorem]{Assumption}

\theoremstyle{definition}
\newtheorem{definition}[theorem]{Definition}

\theoremstyle{remark}

\theoremstyle{thmstyleone}%

\theoremstyle{thmstyletwo}%

\theoremstyle{thmstylethree}%

\begin{document}

\title[RCLUPPr]{Analysis of RCLUPPr: stability, robustness and acceleration}

%%=============================================================%%
%% GivenName	-> \fnm{Joergen W.}
%% Particle	-> \spfx{van der} -> surname prefix
%% FamilyName	-> \sur{Ploeg}
%% Suffix	-> \sfx{IV}
%% \author*[1,2]{\fnm{Joergen W.} \spfx{van der} \sur{Ploeg} 
%%  \sfx{IV}}\email{iauthor@gmail.com}
%%=============================================================%%

\author[1]{\fnm{Haoran} \sur{Guan}\textsuperscript{\textdagger}}\email{guan.haoran@huawei.com}

\author[1,2]{\fnm{Zhenyu} \sur{Zou}\textsuperscript{\textdagger}}\email{zou.zhenyu1@huawei.com}

\author*[1]{\fnm{Yufeng} \sur{Wei}}\email{wei.yufeng@huawei.com}

\author[1]{\fnm{Yipei} \sur{Chen}}\email{chen.yipei1@huawei.com}

\author[1]{\fnm{Peiting} \sur{You}}\email{youpeiting@huawei.com}

\author[1]{\fnm{Yuwei} 
\sur{Fan}}\email{fanyuwei2@huawei.com}

\affil*[1]{\orgdiv{Theory Lab}, \orgname{Huawei Leibniz Research Center}, \orgaddress{\street{Sha Tin}, \city{New Territories}, \postcode{999077}, \state{Hong Kong SAR}, \country{China}}}

\affil[2]{\orgdiv{Department of Mechanical and Automation Engineering}, \orgname{The Chinese University of Hong Kong}, \orgaddress{\street{Sha Tin}, \city{New Territories}, \postcode{999077}, \state{Hong Kong SAR}, \country{China}}}

%%==================================%%
%% Sample for unstructured abstract %%
%%==================================%%

\abstract{In this work, we present a comprehensive rounding error analysis for RCLUPPr, a recently developed randomized CholeskyQR-type algorithm introduced in \cite{RCLUPP}. This method performs LU factorization with partial pivoting (LUPP faztorization) directly on a full-rank tall-skinny matrix $X \in \mathbb{R}^{m \times n}$. Unlike RCLUPP in \cite{RCLUPP}, which applies matrix sketching prior to LUPP factorization, RCLUPPr deploys LUPP as an initial preconditioning step to significantly mitigate the propagation of error significantly. Our rigorous analysis demonstrates that RCLUPPr exhibits superior robustness when applied to the ill-conditioned matrices compared to other CholeskyQR-type algorithms in the high precision. Furthermore, we develop some practical strategies in the implementation to accelerate RCLUPPr. Extensive numerical experiments on the real-world problems validate our theoretical findings, showcasing the robustness and the efficiency of RCLUPPr across the single, double and the mixed-precision frameworks.}

\keywords{LU factorization, QR factorization, matrix sketching, Computational mathematics}

%%\pacs[JEL Classification]{D8, H51}

%%\pacs[MSC Classification]{35A01, 65L10, 65L12, 65L20, 65L70}

\maketitle

\section{Introduction}
\subsection{Developments of CholeskyQR-type algorithms}
In recent years, CholeskyQR has raised much concern as an algorithm for QR factorization of a full-rank tall-skinny matrix $X\in\mathbb{R}^{m\times n}$, see Algorithm~\ref{alg:cholqr}. Here, $R$ is the $R$-factor computed from Cholesky factorization of $G$. For better accuracy in orthogonality, CholeskyQR2 \cite{2014,error} applies CholeskyQR twice. For $X$ with large $\kappa_{2}(X)$, typically with $\kappa_2(X)\geq 10^8$ in double precision, more robust variants have been developed. Such algorithms include Shifted CholeskyQR3 (SCholeskyQR3) \cite{Columns,Shifted}, which introduces a shifted item $sI$ in the first Cholesky factorization to avoid breakdown, and LU-CholeskyQR2 (LUC2) \cite{LUChol}, which combines LU factorization with partial pivoting (LUPP) with CholeskyQR.

\begin{algorithm}
\caption{$[Q,R]=\mbox{CholeskyQR}(X)$ for $X \in \mathbb{R}^{m\times n}$}
\label{alg:cholqr}
\begin{algorithmic}[1]
\STATE $G=X^{\top}X,$
\STATE $R^{\top}R=G,$
\STATE $Q=XR^{-1}.$
\end{algorithmic}
\end{algorithm}%

Driven by advances in randomized linear algebra \cite{halko2011,2020}, matrix sketching has become an effective tool for CholeskyQR-type algorithms \cite{Randomized,Novel,Householder}. By projecting a tall-skinny matrix into a lower-dimensional space, one can compute a preconditioner using numerically stable methods, such as the thin Householder QR or Givens rotations \cite{Higham}. This strategy mitigates the potential breakdown of standard Cholesky factorization at a modest additional cost.

\subsection{RCLUPP and RCLUPPr}
Motivated by randomized CholeskyQR (RCholeskyQR), we propose RCLUPP \cite{RCLUPP}, an improved variant of LUC2 that combines the thin Householder QR with LUPP factorization. As shown in Algorithm~\ref{alg:RCLUPP}, RCLUPP applies matrix sketching directly to the input matrix $X$, thereby avoiding the numerical breakdown of LUC2 caused by a large $\kappa_2(L)$. Here, $\Omega$ is the matrix for sketching. $H$ and $Y_{0}$ denote the $Q$-factor and the $R$-factor from the thin HouseholderQR. $Y_{1}$ is the $R$-factor from the preconditioning step. $Z$ is the $R$-factor when taking CholeskyQR2 for $W$. Compared with RCholeskyQR, RCLUPP delivers better orthogonality with comparable, and sometimes lower, computational cost \cite[Section~6]{RCLUPP}. 

\begin{algorithm}
\caption{$[Q,R]=\mbox{RCLUPP}(X)$ with $X \in \mathbb{R}^{m\times n}$}
\label{alg:RCLUPP}
\begin{algorithmic}[1]
\STATE $A=\Omega X,$
\STATE $LU=PA,$
\STATE $HY_{0}=L,$
\STATE $Y_{1}=Y_{0}U,$
\STATE $W=XY_{1}^{-1},$
\STATE $QZ=W,$
\STATE $R=ZY_{1}.$
\end{algorithmic}
\end{algorithm}%

However, RCLUPP inherits a key limitation of RCholeskyQR. While sketching reduces the computational cost of the subsequent LUPP and the thin Householder QR computations from $X\in\mathbb{R}^{m\times n}$ to the smaller matrix $A\in\mathbb{R}^{s\times n}$, the rounding errors introduced in this first step ultimately limit its stability. Specifically, the orthogonality analysis involves $\norm{Y^{-1}}_2$, which is bounded by $\kappa_2(L)\kappa_2(U)$ \cite[(4.25)]{RCLUPP}. $\kappa_2(X)$ can be bounded in terms of the same quantity. Therefore, both the theoretical and experimental results indicate that RCLUPP becomes rapidly less stable as $\kappa_2(X)$ increases and eventually suffers numerical breakdown.

To address this limitation, we propose RCLUPPr in \cite[Algorithm~5.1]{RCLUPP}, as shown in Algorithm~\ref{alg:RCLUPPr}. Unlike RCLUPP, RCLUPPr applies LUPP directly to $X\in\mathbb{R}^{m\times n}$ as a preconditioner, thereby avoiding the sketching-induced errors in the orthogonality analysis. Numerical results show that it substantially improves upon RCLUPP, RCholeskyQR and LUC2 in applicability, remaining stable and accurate for ill-conditioned matrices in the double precision. However, rigorous rounding-error bounds for RCLUPPr are still lacking. Also, its LUPP factorization of $X$ introduces moderate overhead relative to RCLUPP and RCholeskyQR. Moreover, its behavior in the mixed-precision arithmetic has not yet been systematically studied.

\begin{algorithm}
\caption{$[Q,R]=\mbox{RCLUPPr}(X)$ with $X \in \mathbb{R}^{m\times n}$}
\label{alg:RCLUPPr}
\begin{algorithmic}[1]
\STATE $LU=PX,$
\STATE $L_{s}=\Omega L,$
\STATE $HY_{0}=L_{s},$
\STATE $Y_{1}=Y_{0}U,$
\STATE $W=XY_{1}^{-1},$
\STATE $QZ=W,$
\STATE $R=ZY_{1}.$
\end{algorithmic}
\end{algorithm}%

\subsection{Innovative points}
This work addresses the limitations of RCLUPPr in several aspects. First, we exhibit a comprehensive rounding error analysis for both orthogonality and residual of the algorithm, establishing its advantages over existing methods. Also, we further extend the analysis and numerical experiments to the mixed-precision arithmetic, showing that RCLUPPr remains stable and accurate while preserving its inherent benefits. To the best of our knowledge, this is also the first study to evaluate the robustness of randomized CholeskyQR-type algorithms. Moreover, we develop several profiling-guided acceleration strategies that reduce the CPU time of RCLUPPr without compromising numerical stability. Finally, we validate the theoretical and practical performance of the proposed method on a diverse collection of real-world matrices, including the SVD-based and the arrowhead matrices, as well as matrices arising from capacitively coupled plasma simulations and the control theory. The scope and main contributions of this work are distinct from our previous work \cite{RCLUPP}. In particular, the numerical analysis of RCLUPPr, the mixed-precision experiments and the acceleration strategies presented here are independent of and do not rely on the theoretical results in \cite{RCLUPP}.

\subsection{Outline and notation}
The rest of the paper is structured into the following sections. Section~\ref{sec:Literature} reviews the theoretical background of rounding error analysis and matrix sketching. In Section~\ref{sec:randomized}, we derive the bounds of both orthogonality and residual for RCLUPPr by rounding error analysis. Section~\ref{sec:comparisons} compares the theoretical results of RCLUPPr with those of the existing algorithms and examines the impact of matrix sketching. Numerical experiments validating the analysis across different precisions are presented in Section~\ref{sec:Numerical}. Finally, Section~\ref{sec:conclusions} summarizes the main findings of this work.

Throughout this paper, $\norm{\cdot}_{F}$ and $\norm{\cdot}_{2}$ are the Frobenius and spectral norms. We take $\sigma_i(X)$ as the $i$-th largest singular value of $X$ with $1 \le i \le n$. Specially, $\norm{X}_{2}=\sigma_{1}(X)$. The condition number $X$, $\kappa_{2}(X)$, is defined by $\kappa_{2}(X)=\frac{\norm{X}_{2}}{\sigma_{n}(X)}$. We let $u$ denote the unit roundoff and $\abs{X}$ represent the entrywise absolute value of a matrix $X$. The operator $\mbox{fl}(\cdot)$ indicates floating-point computation, and $I_{n} \in \mathbb{R}^{n\times n}$ signifies the $n \times n$ identity matrix. For any quantity $A$, its floating-point computed counterpart is designated as $\hat{A}$. This hat notation is adhered to consistently for all intermediate computed objects throughout the text.

\section{Some lemmas}
\label{sec:Literature}
This section presents several fundamental lemmas concerning rounding error analysis and the randomized matrix sketching, which lay the theoretical foundation for our subsequent analysis.

\subsection{Rounding error analysis}
In the beginning, we show some lemmas for rounding error analysis.

\begin{lemma}\cite[Corollary 2.4.4]{MatrixC}
\label{lemma Weyl}
For $A,B \in \mathbb{R}^{m\times n}$, then we have
\begin{equation}
\sigma_{n}(A+B) \ge \sigma_{n}(A)-\norm{B}_{2}. \nonumber
\end{equation}
\end{lemma}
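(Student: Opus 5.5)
The plan is to use the variational (min--max) characterization of the smallest singular value. Throughout, $m \ge n$; this is the setting of the paper, where the matrices are tall-skinny. In that case, for any $C \in \mathbb{R}^{m\times n}$ we have
\begin{equation}
\sigma_{n}(C)=\min_{\norm{x}_{2}=1}\norm{Cx}_{2}. \nonumber
\end{equation}
The reason is that $\sigma_n(C)^2$ is the smallest eigenvalue of the $n\times n$ symmetric positive semidefinite matrix $C^{\top}C$. By the Rayleigh quotient characterization, that eigenvalue equals $\min_{\norm{x}_2=1} x^{\top}C^{\top}Cx=\min_{\norm{x}_2=1}\norm{Cx}_2^2$. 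I will establish this identity first, for instance from the SVD $C=U\Sigma V^{\top}$: write $x=Vy$ with $\norm{y}_2=1$, so that $\norm{Cx}_2^2=\sum_i \sigma_i^2 y_i^2 \ge \sigma_n^2$, with equality at $y=e_n$.

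With this in hand, I fix an arbitrary unit vector $x\in\mathbb{R}^n$ and apply the triangle inequality:
\begin{equation}
\norm{(A+B)x}_{2} \ge \norm{Ax}_{2}-\norm{Bx}_{2} \ge \sigma_{n}(A)-\norm{B}_{2}. \nonumber
\end{equation}
The first term is bounded using the characterization above, and the second using the definition of the operator norm. Since the right-hand side does not depend on $x$, I can take the minimum over all unit vectors $x$ on the left. The characterization applied to $C=A+B$ then gives $\sigma_n(A+B)\ge\sigma_n(A)-\norm{B}_2$.

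No step is a real obstacle; the only point of care is the dimension convention. If $m<n$, then $\sigma_n$ would be zero for every matrix, or undefined, and the min-over-$x$ formula would fail. I will therefore state explicitly that $m\ge n$, which is the setting in which the lemma is used. Alternatively, one could cite Weyl's inequality $\sigma_{i+j-1}(A+B)\le\sigma_i(A)+\sigma_j(B)$ with $i=n$, $j=1$, applied to the pair $(A+B,-B)$, which yields the same conclusion. The direct argument above is self-contained, though, so I would use it.
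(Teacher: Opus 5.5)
Your proof is correct. The paper gives no proof of its own to compare against: the lemma is imported directly from \cite[Corollary 2.4.4]{MatrixC}. Your write-up therefore supplies a self-contained justification of that textbook fact, and it is the standard one. For $m\ge n$, the identity $\sigma_n(C)=\min_{\norm{x}_2=1}\norm{Cx}_2$ follows from the SVD exactly as you describe. The triangle inequality then gives $\norm{(A+B)x}_2\ge\sigma_n(A)-\norm{B}_2$ for every unit $x$, and in particular for the minimizer. Your alternative via Weyl's inequality applied to the pair $(A+B,-B)$ is also valid.

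The restriction $m\ge n$ costs nothing. Every application in the paper, for example in \eqref{eq:ny}, \eqref{eq:ns1}, \eqref{eq:nls}, \eqref{eq:eq1} and \eqref{eq:sigma}, involves $n\times n$, $s\times n$ or $m\times n$ matrices with $s,m\ge n$. The paper often substitutes $\norm{B}_F$ for $\norm{B}_2$, and this is covered by $\norm{B}_2\le\norm{B}_F$. Your variational route also gives, in one line, the product bound $\sigma_n(AB)\ge\sigma_n(A)\sigma_n(B)$, which the paper uses without proof just before \eqref{eq:ny}.

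One small correction to your closing remark. Under the zero-padding convention $\sigma_i=0$ for $i>\min(m,n)$, the min-formula does not fail for $m<n$. Both sides are $0$ because $C$ then has a nontrivial kernel, and the lemma becomes trivially true. It only lacks meaning if $\sigma_n$ is left undefined. This does not affect your proof.
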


\begin{lemma}\cite[(3.12)]{Higham}
\label{lemma mm}
For $A \in \mathbb{R}^{m\times n}$ and $B \in \mathbb{R}^{n\times p}$, the error in computing the matrix product $AB$ in floating-point arithmetic is bounded by
\begin{equation}
\abs{AB-fl(AB)}\le \gamma_{n}\abs{A}\abs{B}. \nonumber
\end{equation}
Here, $\abs{A}$ is the matrix whose $(i,j)$ element is $\abs{a_{ij}}$ and
\begin{equation}
\gamma_n: = \frac{n{\uu}}{1-n{\uu}} \le 1.02n{\uu}. \nonumber
\end{equation}
\end{lemma}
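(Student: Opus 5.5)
The bound follows by running the standard model of floating-point arithmetic through a single inner product. It suffices to treat one entry of the product. For indices $i,j$, entry $(i,j)$ of $AB$ is the inner product $s=\sum_{k=1}^{n}a_{ik}b_{kj}$. We model the computed value $\hat{s}=\mathrm{fl}(s)$ through the standard model $\mathrm{fl}(x\circ y)=(x\circ y)(1+\delta)$ with $\abs{\delta}\le\uu$, valid for every elementary operation $\circ$. Throughout we assume $n\uu<1$, so that $\gamma_n$ is well defined.

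\textbf{Accumulation of rounding factors.} Suppose the sum is accumulated left to right. Each product $a_{ik}b_{kj}$ picks up one factor $(1+\delta_k)$ from the multiplication. It then picks up one factor $(1+\epsilon_\ell)$ from each addition in which it participates afterwards. The product $a_{i1}b_{1j}$ takes part in all $n-1$ additions and so carries $n$ rounding factors in total. For $k\ge 2$, the product $a_{ik}b_{kj}$ carries $n-k+2\le n$ factors. Consequently
\begin{equation}
\hat{s}=\sum_{k=1}^{n}a_{ik}b_{kj}\prod_{t=1}^{p_k}(1+\delta_{k,t})^{\pm1},\qquad p_k\le n. \nonumber
\end{equation}
For a different summation order, the only change is that $p_k$ becomes at most the depth of the summation tree plus one, which is still at most $n$.

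\textbf{Key estimate.} The central step, and the only genuinely nontrivial one, is Higham's Lemma 3.1. It states that a product of $p$ factors $(1+\delta_t)^{\pm1}$ with $\abs{\delta_t}\le\uu$ and $p\uu<1$ can be written as $1+\theta_p$ with $\abs{\theta_p}\le\gamma_p$. I would prove it by induction on $p$. In the inductive step, multiplying by $(1+\delta)$ bounds the new perturbation by $\gamma_{p}+\uu+\gamma_p\uu\le\gamma_{p+1}$. Dividing by $(1+\delta)$ is handled through $(1+\delta)^{-1}=1-\delta/(1+\delta)$ together with the analogous elementary inequality. We also use that $\gamma_p$ is monotone in $p$, which gives $\abs{\theta_{p_k}}\le\gamma_n$ for every $k$.

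\textbf{Entrywise bound.} Combining the two steps gives
\begin{equation}
\abs{s-\hat{s}}=\Bigl|\sum_{k}a_{ik}b_{kj}\theta^{(k)}\Bigr|\le\gamma_n\sum_k\abs{a_{ik}}\abs{b_{kj}}=\gamma_n(\abs{A}\abs{B})_{ij}. \nonumber
\end{equation}
Applying this to every entry yields the matrix inequality $\abs{AB-\mathrm{fl}(AB)}\le\gamma_n\abs{A}\abs{B}$. The final claim $\gamma_n\le1.02n\uu$ holds under the usual side condition $n\uu\le 0.01$, implicit in the cited form, because then $1/(1-n\uu)\le 1/0.99<1.02$.
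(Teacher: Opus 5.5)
Your proof is correct and takes the same route as the source the paper relies on. The paper gives no proof of its own and simply imports Higham's (3.12), which is derived exactly as you do: each entry of $AB$ is treated as an inner product carrying at most $n$ rounding factors per term, Higham's Lemma~3.1 gives $\abs{\theta_{p_k}}\le\gamma_n$, and the bound follows by summing entrywise. Your caveat is accurate: $\gamma_n\le 1.02n\uu$ needs a side condition such as $n\uu\le 0.01$, and the standard model implicitly assumes no underflow, and the paper's statement of the lemma leaves both unstated.
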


\begin{lemma}\cite[Theorem 9.3]{Higham}
\label{lemma lu}
Suppose that $L \in \mathbb{R}^{m\times n}$ and $U \in \mathbb{R}^{n\times n}$ are computed LU-factors of $A \in \mathbb{R}^{m\times n}$, then we have
\begin{equation}
\abs{A-LU} \le \gamma_{n}\abs{L}\abs{U}. \nonumber
\end{equation}
\end{lemma}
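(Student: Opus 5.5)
The statement is the classical backward error result for Gaussian elimination applied to a rectangular matrix: the computed factors $\hat{L}\in\mathbb{R}^{m\times n}$ (unit lower trapezoidal) and $\hat{U}\in\mathbb{R}^{n\times n}$ (upper triangular) satisfy $\abs{A-\hat{L}\hat{U}}\le\gamma_{n}\abs{\hat{L}}\abs{\hat{U}}$. I will prove it entrywise, using the Doolittle form of elimination and the standard rounding model $\mbox{fl}(x\,\mathrm{op}\,y)=(x\,\mathrm{op}\,y)(1+\delta)$ with $\abs{\delta}\le \uu$. Pivoting only permutes the rows of $A$, so it suffices to treat $A$ itself, understood as already permuted. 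Every pivoting strategy produces the same arithmetic once the row order is fixed.

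Let $\hat l_{ij}$ and $\hat u_{ij}$ denote the computed entries. The algorithm computes them in two ways.

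1. For $i\le j$ it forms $\hat u_{ij}=\mbox{fl}\bigl(a_{ij}-\sum_{k<i}\hat l_{ik}\hat u_{kj}\bigr)$.
2. For $i>j$ (with $i$ running up to $m$, which is the only change from the square case) it forms $\hat l_{ij}=\mbox{fl}\bigl((a_{ij}-\sum_{k<j}\hat l_{ik}\hat u_{kj})/\hat u_{jj}\bigr)$.

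The key tool is the standard lemma for evaluating $y=\bigl(c-\sum_{k=1}^{r-1}a_kb_k\bigr)/b_r$ in any summation order. It gives
\begin{equation}
\Bigl|c-\sum_{k=1}^{r-1}a_kb_k-b_r\hat y\Bigr|\le\gamma_{r}\Bigl(\sum_{k=1}^{r-1}\abs{a_k}\abs{b_k}+\abs{b_r}\abs{\hat y}\Bigr),\nonumber
\end{equation}
and the same bound holds with $\gamma_{r-1}$ and the division omitted. This is proved by pushing the $(1+\delta)$ factors onto the data and collecting them with the usual bound $\prod_{k=1}^{r}(1+\delta_k)^{\pm1}=1+\theta_r$, where $\abs{\theta_r}\le\gamma_r$.

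Applying this lemma to the two kinds of entries:

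1. For $\hat u_{ij}$ with $i\le j$, take $r=i$ and $b_r=\hat l_{ii}=1$. This gives $\abs{a_{ij}-\sum_{k\le i}\hat l_{ik}\hat u_{kj}}\le\gamma_{i}\sum_{k\le i}\abs{\hat l_{ik}}\abs{\hat u_{kj}}$.
2. For $\hat l_{ij}$ with $i>j$, take $r=j$ and $b_r=\hat u_{jj}$. This gives $\abs{a_{ij}-\sum_{k\le j}\hat l_{ik}\hat u_{kj}}\le\gamma_{j}\sum_{k\le j}\abs{\hat l_{ik}}\abs{\hat u_{kj}}$.

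In both cases the sum over $k\le\min(i,j)$ is exactly $(\hat L\hat U)_{ij}$ and $(\abs{\hat L}\abs{\hat U})_{ij}$, because $\hat L$ is lower trapezoidal and $\hat U$ is upper triangular. The index satisfies $\min(i,j)\le n$ and $\gamma$ is monotone in its index, so every entry is bounded by $\gamma_n(\abs{\hat L}\abs{\hat U})_{ij}$. This gives the claim.

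The main point needing care is the inner-product-with-division lemma, specifically attaching the rounding error from the division to $\hat y$ rather than to $c$, so that the bound is relative to the computed factors. I will handle this by writing $\hat y\,b_r(1+\delta)^{-1}$ and absorbing $(1+\delta)^{-1}$ into $\theta_r$. I also need to note that the rows $i>n$ introduce no new structure, since they only ever go through the division case with $j\le n$.
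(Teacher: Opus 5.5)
Your proposal is correct and follows the standard proof of Higham's Theorem~9.3, which the paper cites without giving a proof: you treat pivoting by working with the pre-permuted matrix, write down the Doolittle recurrences for $\hat u_{ij}$ and $\hat l_{ij}$, apply Higham's Lemma~8.4 on an inner product followed by a division, and then use $\min(i,j)\le n$ together with the monotonicity of $\gamma_k$ to reach $\gamma_n$. Your handling of the extra rows $i>n$ is also right, since those rows only ever go through the division case.
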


\begin{lemma}\cite[Theorem 8.5]{Higham}
\label{lemma system}
Let the triangular system $Tx=b$, where $T \in \mathbb{R}^{n\times n}$ is non-singular, be solved by substitution with any ordering. Then, the computed solution $x$ satisfies
\begin{equation}
(T+\Delta T)x=b, \quad \abs{\Delta T}\le \gamma_{n}\abs{T}. \nonumber
\end{equation}
\end{lemma}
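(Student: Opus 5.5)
This statement is Higham's Theorem~8.5, the componentwise backward error result for substitution. Since the paper cites it rather than deriving it, I would reproduce the standard argument. I do it for a lower triangular $T$ solved by forward substitution. The upper triangular case is identical with the indices reversed. ``Any ordering'' means that each inner product $\sum_{j<i} t_{ij}x_j$ may be accumulated in any order, and the argument is insensitive to that order.

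\textbf{Step 1: model one step.} The $i$-th component is computed as
\begin{equation}
x_i=\mbox{fl}\Bigl(\bigl(b_i-\textstyle\sum_{j=1}^{i-1} t_{ij}x_j\bigr)/t_{ii}\Bigr). \nonumber
\end{equation}
I use the standard model $\mbox{fl}(a\circ b)=(a\circ b)(1+\delta)$ with $\abs{\delta}\le u$. Whatever the summation order, the computed value then satisfies
\begin{equation}
b_i(1+\theta^{(i)}_{0})=\sum_{j=1}^{i-1} t_{ij}x_j(1+\theta^{(i)}_{j}) + t_{ii}x_i(1+\theta^{(i)}_{i}). \nonumber
\end{equation}
Here each $(1+\theta)$ is a product of at most $i$ factors of the form $(1+\delta)^{\pm1}$.

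\textbf{Step 2: normalize.} I divide through by $(1+\theta^{(i)}_0)$. This moves all the perturbations onto the entries of $T$ and leaves $b_i$ unperturbed. Each entry $t_{ij}$ with $j\le i$ then becomes $t_{ij}(1+\epsilon_{ij})$, where $(1+\epsilon_{ij})$ is a product of at most $n$ factors $(1+\delta)^{\pm1}$.

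\textbf{Step 3: count the factors.} This is the step I expect to be the main obstacle. The rounding from the division and the subtraction from $b_i$ have to be distributed so that no entry accumulates more than $n$ factors. The bookkeeping depends on the summation ordering, so I would argue that in every ordering each term takes part in at most $i-1$ additions plus the one division. For the $t_{ii}$ term, the division can be written as multiplication by $(1+\delta)^{-1}$ placed on $t_{ii}$.

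\textbf{Step 4: apply Higham's Lemma~3.1.} For $nu<1$, a product of $n$ factors $(1+\delta)^{\pm1}$ equals $1+\theta_n$ with $\abs{\theta_n}\le\gamma_n$. It follows that $\abs{\epsilon_{ij}}\le\gamma_n$. Defining $\Delta T=(t_{ij}\epsilon_{ij})$ gives $(T+\Delta T)x=b$ with $\abs{\Delta T}\le\gamma_n\abs{T}$, as claimed.
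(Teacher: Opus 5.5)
The paper gives no proof of its own; it quotes Higham. Your skeleton is the standard one: a perturbed identity for one row, normalization so that $b_i$ is unperturbed, then Lemma~3.1. The gap is in Steps~2--3, which is where the content of the theorem lies.

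After Step~1, the coefficient of $b_i$ carries up to $i-1$ rounding factors and that of $t_{ij}x_j$ up to $i$. If you divide one by the other with the factors treated as unrelated, up to $2i-1$ factors land on $t_{ij}$. That gives only $\abs{\Delta T}\le\gamma_{2n-1}\abs{T}$. Your justification (``each term takes part in at most $i-1$ additions plus the one division'') bounds the count \emph{before} normalization. So it does not support the claim in Step~2 that at most $n$ factors remain afterwards. Also, the off-diagonal terms incur a multiplication rounding; the division belongs to $t_{ii}$ only.

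The missing idea is that the roundings acting on $b_i$ are largely the \emph{same} roundings acting on the other terms. Normalization therefore cancels them instead of compounding them. For the recursive ordering $s\leftarrow b_i$, $s\leftarrow s-\mbox{fl}(t_{ij}x_j)$, $j=1,\dots,i-1$, you get
\begin{equation}
t_{ii}x_i(1+\delta)^{-1}=b_i\prod_{l=1}^{i-1}(1+\epsilon_l)-\sum_{j=1}^{i-1}t_{ij}x_j(1+\mu_j)\prod_{l=j}^{i-1}(1+\epsilon_l). \nonumber
\end{equation}
Dividing by $\prod_{l=1}^{i-1}(1+\epsilon_l)$ leaves $(1+\mu_j)\prod_{l<j}(1+\epsilon_l)^{-1}$ on $t_{ij}$, which is $j$ factors. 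It leaves $(1+\delta)^{-1}\prod_{l=1}^{i-1}(1+\epsilon_l)^{-1}$ on $t_{ii}$, which is $i$ factors.

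For an arbitrary ordering, view the sum as a binary tree with $i$ leaves and $i-1$ addition nodes. Take the lowest common ancestor of the leaves $b_i$ and $t_{ij}x_j$. The factors of the nodes from that ancestor up to the root occur in both coefficients and cancel. So $t_{ij}$ keeps its multiplication factor plus at most $i-2$ addition factors. The diagonal entry $t_{ii}$ keeps the division factor plus at most $i-1$ inverted ones. With this count every entry has at most $i\le n$ factors, and your Step~4 goes through as written.
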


\begin{lemma}\cite[Section 18.3]{Higham}
\label{lemma hqr}
Regarding a step of the thin HouseholderQR for the input matrix $X \in \mathbb{R}^{m\times n}$, the computed orthogonal factor $\hat{Q}$ and the upper-triangular factor $\hat{R} \in \mathbb{R}^{n\times n}$ satisfy
\begin{align}
\norm{\hat{Q}^{\top}\hat{Q}-I_{n}}_{F} &\le \mathcal{O}(\uu), \nonumber \\
\norm{\hat{Q}\hat{R}-X}_{F} &\le \gamma_{cmn} \cdot \norm{X}_{F}. \nonumber
\end{align}
Here, $c$ is a positive constant.
\end{lemma}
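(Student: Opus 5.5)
The plan is to recover the statement from the standard backward error analysis of Householder transformations, and then pass from the exactly orthogonal factor to the explicitly formed thin factor $\hat{Q}$. The first step is the column-wise backward stability result for Householder QR (the theorem in \cite[Section 18.3]{Higham}, Theorem 19.4 in the second edition). It says that there exist an exactly orthogonal $\tilde{Q}\in\mathbb{R}^{m\times m}$ and a perturbation $\Delta X$ such that
\begin{equation}
X+\Delta X=\tilde{Q}\begin{pmatrix}\hat{R}\\ 0\end{pmatrix},\qquad \norm{\Delta x_j}_2\le \gamma_{cmn}\norm{x_j}_2 ,\quad j=1,\dots,n. \nonumber
\end{equation}
This is obtained by composing the $n$ single-reflector lemmas: each computed reflector $\hat{P}_k$ applied to a vector equals an exact reflector $P_k$ applied to a perturbed vector, with relative error $\gamma_{cm}$. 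Summing the $n$ perturbations and using $\sum_k \gamma_{c_k m}\le\gamma_{cmn}$ gives the bound. Squaring and summing over the columns yields $\norm{\Delta X}_F\le\gamma_{cmn}\norm{X}_F$.

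The second step treats the thin orthogonal factor. I will take $\hat{Q}$ to be formed by applying the computed reflectors in reverse order to the first $n$ columns of $I_m$. Applying the same backward error lemma to $I_m(:,1{:}n)$ gives $\hat{Q}=\tilde{Q}(:,1{:}n)+\Delta Q$ with $\norm{\Delta Q}_F\le\sqrt{n}\,\gamma_{cmn}$. Then
\begin{equation}
\hat{Q}^{\top}\hat{Q}-I_n=\tilde{Q}_1^{\top}\Delta Q+\Delta Q^{\top}\tilde{Q}_1+\Delta Q^{\top}\Delta Q, \nonumber
\end{equation}
where $\tilde{Q}_1$ denotes the first $n$ columns of $\tilde{Q}$. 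Since $\tilde{Q}_1$ has orthonormal columns, this gives $\norm{\hat{Q}^{\top}\hat{Q}-I_n}_F\le 2\sqrt{n}\gamma_{cmn}+n\gamma_{cmn}^2$. This is $\mathcal{O}(\uu)$ once the dimension-dependent constants are absorbed into the $\mathcal{O}$-notation, as the statement implicitly does.

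The third step is the residual. I write
\begin{equation}
\hat{Q}\hat{R}-X=\Delta Q\,\hat{R}+(\tilde{Q}_1\hat{R}-X)=\Delta Q\,\hat{R}+\Delta X. \nonumber
\end{equation}
From step one, $\norm{\hat{R}}_F=\norm{X+\Delta X}_F\le(1+\gamma_{cmn})\norm{X}_F$. Hence
\begin{equation}
\norm{\hat{Q}\hat{R}-X}_F\le\bigl(\sqrt{n}\gamma_{cmn}(1+\gamma_{cmn})+\gamma_{cmn}\bigr)\norm{X}_F. \nonumber
\end{equation}
Using $\sqrt{n}\,\gamma_{k}\le\gamma_{\sqrt{n}k}$ and enlarging the constant $c$, this is at most $\gamma_{cmn}\norm{X}_F$ in the paper's notation.

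I expect the main obstacle to be this final constant bookkeeping rather than any conceptual difficulty. The $\sqrt{n}$ factor coming from the Frobenius norm of $\Delta Q$ has to be absorbed, either by redefining $c$ so that $\gamma_{cmn}$ dominates $\sqrt{n}\gamma_{c'mn}$ (under the standard assumption $cmn\uu$ small), or by noting that each column of $\hat{Q}\hat{R}$ depends only on the first $j$ columns of $\hat{Q}$. The second route would let me bound the residual column by column, which is the sharper option if needed.
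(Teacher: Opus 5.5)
Your route is the standard argument behind the citation: a columnwise backward error bound for $\hat R$ with an exactly orthogonal $\tilde Q$, followed by the same single-reflector lemma applied to $I_m(:,1{:}n)$. Everything up to your displayed residual bound is correct. You should say explicitly why the $\tilde Q$ of step two coincides with that of step one. The exact reflector $P_k$ in the single-reflector lemma is determined by the computed Householder vector, not by the vector it acts on, and $\tilde Q=P_1P_2\cdots P_n$ is exactly what the reverse-order accumulation produces.

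The gap is the final absorption. The inequality $\sqrt{n}\,\gamma_{c'mn}\le\gamma_{cmn}$ forces $c\ge c'\sqrt{n}$, so ``enlarging $c$'' makes $c$ grow with $n$. The statement, and the rest of the paper, treat $c$ as dimension-free. With such a $c$ you have only shown a bound of the form $\gamma_{c\,mn^{3/2}}\norm{X}_F$.

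Your fallback does not repair this. Column $j$ of the residual is $\Delta x_j+\sum_{i\le j}\Delta q_i\hat r_{ij}$, and the worst-case estimate is $\norm{\sum_{i\le j}\Delta q_i\hat r_{ij}}_2\le\max_i\norm{\Delta q_i}_2\sum_{i\le j}\abs{\hat r_{ij}}\le\sqrt{j}\,\gamma_{c'mn}\norm{\hat r_j}_2$. This still carries a factor $\sqrt{n}$ when the mass of $X$ sits in the trailing columns.

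Your argument honestly gives two things. For the exactly orthogonal factor, $\norm{\tilde Q_1\hat R-X}_F=\norm{\Delta X}_F\le\gamma_{cmn}\norm{X}_F$. For the computed $\hat Q$, you get an extra factor $\sqrt{n}$. This matches the standard bound $\norm{\hat Q-\tilde Q_1}_F\le\sqrt{n}\,\gamma_{cmn}$ for an explicitly accumulated Householder factor. Rather than hide the factor, conclude that the lemma as written holds only if $c$ may scale like $\sqrt{n}$, or if $\hat Q$ is replaced by $\tilde Q_1$.

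This matters downstream. In Lemma~\ref{lemma 2} the bound is applied to the computed $\hat H$ with $c$ treated as dimension-free, giving $1.02csn\sqrt{n}\uu_{b}$. Restoring the $\sqrt{n}$ turns this into $1.02csn^{2}\uu_{b}$ in \eqref{eq:dhf} and in the conditions \eqref{eq:a}--\eqref{eq:b}.
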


\subsection{Matrix sketching}
\label{sec:ms}
This subsection briefly introduces matrix sketching and the subspace embedding properties. Readers can refer to \cite{rgs,tool} for further details.

\begin{definition}[$\epsilon$-subspace embedding]\cite[Definition 2.1]{rgs}
\label{definition 1}
We let $\mathcal{K}\subset\mathbb{R}^{n}$ be a subspace. A matrix $\Omega\in\mathbb{R}^{s\times n}$ is an $\epsilon$-subspace embedding for $\mathcal{K}$ when for all $x,y\in\mathcal{K}$, 
\begin{equation}
\left|\langle x,y\rangle-\langle\Omega x,\Omega y\rangle\right|
\leq \epsilon \norm{x}_{2}\norm{y}_{2},
\end{equation}
holds with $0\leq\epsilon<1$.
\end{definition}

Definition~\ref{definition 1} concerns a prescribed subspace. In the real practice, one often uses a random sketching matrix whose distribution provides the embedding property uniformly for any fixed low-dimensional subspace.

\begin{definition}[$(\epsilon,p,n)$ oblivious $l_2$-subspace embedding]\cite[Definition 2.3]{rgs}
\label{definition 2}
A random matrix $\Omega\in\mathbb{R}^{s\times m}$ is an $(\epsilon,p,n)$ oblivious $l_2$-subspace embedding if, for every fixed $n$-dimensional subspace $\mathcal{K}\subset\mathbb{R}^{m}$, $\Omega$ is an $\epsilon$-subspace embedding for $\mathcal{K}$ with probability at least $1-p$.
\end{definition}

With Definition~\ref{definition 2}, we have the following properties of matrix sketching 

\begin{lemma}[Matrix sketching]\cite[Corollary 3.1, Proposition 3.2]{Householder}
\label{lemma 22}
Let $X\in\mathbb{R}^{m\times n}$ and suppose that $\Omega$ is an $\epsilon$-subspace embedding for $\operatorname{range}(X)$. Then, with probability at least $1-p$,
\begin{align}
\frac{\norm{\Omega X}_{2}}{\norm{X}_{2}} &\in [(1-\epsilon)^{\frac{1}{2}}, (1+\epsilon)^{\frac{1}{2}}], \nonumber \\
\frac{\norm{\Omega X}_{F}}{\norm{X}_{F}} &\in [(1-\epsilon)^{\frac{1}{2}}, (1+\epsilon)^{\frac{1}{2}}], \nonumber \\
(1-\epsilon)^{\frac{1}{2}}\,\sigma_n(X)
&\leq \sigma_n(\Omega X)
\leq \norm{\Omega X}_{2}
\leq (1+\epsilon)^{\frac{1}{2}}\,\norm{X}_{2}, \nonumber \\
\frac{\sigma_n(\Omega X)}{(1+\epsilon)^{\frac{1}{2}}}
&\leq \sigma_n(X)
\leq \norm{X}_{2}
\leq \frac{\norm{\Omega X}_{2}}{(1-\epsilon)^{\frac{1}{2}}}. \nonumber
\end{align}
\end{lemma}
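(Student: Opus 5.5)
The plan is to reduce every one of the four inequalities to the defining inequality of an $\epsilon$-subspace embedding, Definition~\ref{definition 1}, applied with $x=y=Xv$ for an arbitrary $v\in\mathbb{R}^{n}$. By Definition~\ref{definition 2}, the event that $\Omega$ embeds $\operatorname{range}(X)$ has probability at least $1-p$. On that event, taking $x=y=Xv$ gives $\abs{\norm{Xv}_2^2-\norm{\Omega Xv}_2^2}\le \epsilon\norm{Xv}_2^2$, which we rewrite as
\begin{equation}
(1-\epsilon)\norm{Xv}_2^2\le \norm{\Omega Xv}_2^2\le (1+\epsilon)\norm{Xv}_2^2 \quad \text{for all } v\in\mathbb{R}^{n}. \nonumber
\end{equation}
Everything else is a consequence of this two-sided quadratic-form bound.

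For the spectral norm, we take the maximum over unit vectors $v$ in the displayed inequality and then take square roots. This gives the first ratio bound. For the Frobenius norm, we apply the same inequality to $v=e_j$, which compares the columns $Xe_j$ and $\Omega Xe_j$, and sum over $j=1,\dots,n$. Since $\norm{X}_F^2=\sum_j\norm{Xe_j}_2^2$, this yields the second ratio bound.

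For the smallest singular value, we use the Courant--Fischer characterization $\sigma_n(M)=\min_{\norm{v}_2=1}\norm{Mv}_2$, which holds because $n$ is the number of columns of $M$. Minimizing the displayed inequality over unit $v$ gives
\begin{equation}
(1-\epsilon)^{\frac12}\sigma_n(X)\le\sigma_n(\Omega X)\le(1+\epsilon)^{\frac12}\sigma_n(X). \nonumber
\end{equation}
Chaining the left half with $\sigma_n(\Omega X)\le\norm{\Omega X}_2$ and with the upper spectral bound from the previous step produces the third line of the lemma. The fourth line is obtained by inverting the same inequalities. Dividing the right half of the $\sigma_n$ sandwich by $(1+\epsilon)^{1/2}$ gives the leftmost inequality. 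Dividing the lower spectral bound $(1-\epsilon)^{1/2}\norm{X}_2\le\norm{\Omega X}_2$ by $(1-\epsilon)^{1/2}$ gives the rightmost one, and the middle inequality $\sigma_n(X)\le\norm{X}_2$ is trivial.

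There is no real obstacle; the only point needing care is the quantifier bookkeeping. The deterministic statement assumes $\Omega$ is an $\epsilon$-embedding for $\operatorname{range}(X)$, while the probabilistic phrasing comes from Definition~\ref{definition 2}, so all four bounds hold simultaneously on the single event of probability at least $1-p$. The dimension conventions also need a remark: $\Omega$ has $m$ columns, and the statement should be read with $s\ge n$ so that $\sigma_n(\Omega X)$ is meaningful. This is automatic once $\epsilon<1$, since the lower bound in the displayed inequality forces $\Omega X$ to have full column rank.
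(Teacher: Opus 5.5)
Your proof is correct, and it is essentially the standard argument behind the results the paper cites for this lemma (the paper gives no proof of its own): you derive the two-sided quadratic-form bound from $x=y=Xv$, maximize or minimize over unit $v$ for $\norm{\cdot}_2$ and $\sigma_n$, and sum over columns for $\norm{\cdot}_F$. One small point about your closing remark: $\epsilon<1$ forces $\Omega X$ to have full column rank (and hence $s\ge n$) only when $X$ itself has full column rank, as in the paper's setting; otherwise the bounds still hold under the convention $\sigma_n(M)=\min_{\norm{v}_2=1}\norm{Mv}_2$, because both sides of the $\sigma_n$ inequalities are then zero.
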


Common methods for matrix sketching include the Gaussian sketch, subsampled randomized Hadamard transforms (SRHTs) and the CountSketch. In this work, we consider the Gaussian sketch with the sketch matrix $\Omega=\frac{1}{\sqrt{s}}G$, where $G\in\mathbb{R}^{s\times m}$ has independent entries drawn from $\mathcal{N}(0,1)$. To obtain an $(\epsilon,p,n)$ oblivious $l_2$-subspace embedding, the sketch size can be chosen as \cite{4031351}
\begin{equation}
s=\eta \cdot \frac{log n \cdot log{\frac{1}{p}}}{\epsilon^{2}}. \nonumber
\end{equation}
For $X\in\mathbb{R}^{m\times n}$ with $m\geq n$, one typically takes $n\leq s\leq m$ with $S=\mathcal{O}(n)$.

\section{Theoretical results of RCLUPPr}
\label{sec:randomized}
This section presents the theoretical analysis of RCLUPPr within a mixed-precision framework. We write the starting steps of RCLUPPr in Algorithm~\ref{alg:RCLUPPr} with the error matrices first.
\begin{align}
\hat{L}\hat{U} &= PX+\Delta_{lu}, \label{eq:lu} \\
\hat{L_{s}} &= \Omega \hat{L}+\Delta_{s}, \label{eq:es} \\
\hat{H}\hat{Y}_{0} &= \hat{L_{s}}+\Delta_{h}, \label{eq:eh} \\
\hat{Y}_{1} &=\hat{Y}_{0}\hat{U}+\Delta_{y}, \label{eq:ey} \\
\hat{W}\hat{Y}_{1} &=X+\Delta_{x}, \label{eq:ex} 
\end{align}
Here, $\Delta_{lu}$ in \eqref{eq:elu} is the error matrix of LUPP factorization. $\Delta_{s}$ in \eqref{eq:es} denotes the error matrix of matrix sketching. $\Delta_{h}$ in \eqref{eq:eh} is the error matrix of the thin HouseholderQR. $\Delta_{y}$ in \eqref{eq:ey} is the error generated by matrix multiplications. $\Delta_{x}$ in \eqref{eq:ex} is the error from solving linear systems.

\subsection{Settings for RCLUPPr and the corresponding rounding error analysis}
For the preconditioning steps of RCLUPPr, we propose to increase the accuracy in some cheap lines to get $\hat{W}$, while the remaining lines with the dominant computational costs are computed with the working precision. 

\begin{assumption}
\label{assumption:1}
In Algorithm~\ref{alg:RCLUPPr}, steps $2-4$, which corresponds to \eqref{eq:es}-\eqref{eq:ey}, work with the unit roundoff $\uu_{b}$. The remaining steps, including \eqref{eq:lu}, \eqref{eq:ex} and CholeskyQR2, work with the unit roundoff $\uu$. $\uu_{b}$ is the unit roundoff in the higher accuracy than, or at least equivalent to $\uu$.
\end{assumption}

With Definition~\ref{definition 2}, we have the following assumption. 

\begin{assumption}
\label{assumption:s}
If the sketch matrix $\Omega \in \mathbb{R}^{s\times m}$ is an $(\epsilon,p,n)$ oblivious $l_{2}$-subspace embedding in $\mathbb{R}^{m}$, we let
\begin{equation}
\epsilon < \frac{1-\eta^2}{1+\eta^2},
\quad
\eta
=
\max\left\{
12.8\sqrt{mn\uu+n(n+1)\uu}+0.1,\,
10.44n\sqrt{n}\uu+0.1,\,
0.105
\right\}<1. \nonumber
\end{equation}
\end{assumption}
In Assumption~\ref{assumption:s}, we can also have $\eta \le (\frac{1-\epsilon}{1+\epsilon})^{\frac{1}{2}}$.

For RCLUPPr, we provide the following settings:
\begin{align}
m\sqrt{s}\uu_{b} \le \frac{1}{64} &, \quad 
c_{1}sn\sqrt{n}\uu_{b} \le \frac{1}{64}, \label{eq:a} \\
\kappa_{2}(\hat{L}) \le \frac{(1-\epsilon)^{\frac{1}{2}}}{25.5t+25.5csn\sqrt{n}\uu_{b} \cdot ((1+\epsilon)^{\frac{1}{2}}+t)} &, \quad 
\kappa_{2}(\hat{L})\kappa_{2}(\hat{U}) \le \frac{(1-\epsilon)^{\frac{1}{2}}}{23.6dn^{2}\uu_{c} \cdot ((1+\epsilon)^{\frac{1}{2}}+t)}, \label{eq:b}
\end{align}
with $c_{1}=\max(c,1)$, $t=1.02m\uu_{b} \cdot \sqrt{n}\norm{\Omega}_{F}$, $d=\max(\norm{\Omega}_{2}, 1)$ and $\uu_{c}=\{\uu_{b}, \uu\}$.

Based on the assumptions and settings above, we bound $\norm{\hat{W}}_{2}$ and $\sigma_{n}(\hat{W})$.

\begin{theorem}[Bounding $\norm{\hat{W}}_{2}$ and $\sigma_{n}(\hat{W})$]
\label{theorem:RCLUPPr}
With Assumption~\ref{assumption:1}, Assumption~\ref{assumption:s}, \eqref{eq:a} and \eqref{eq:b}, for $X \in \mathbb{R}^{m\times n}$, then for $\hat{W}$ in \eqref{eq:ex}, we have
\begin{align}
\norm{\hat{W}}_{2} &\le \frac{1.28}{(1-\epsilon)^{\frac{1}{2}}}, \label{eq:w2} \\
\sigma_{n}(\hat{W}) &\le \frac{4}{5(1+\epsilon)^{\frac{1}{2}}}-\frac{0.08}{(1-\epsilon)^{\frac{1}{2}}}, \label{eq:nw}
\end{align}
with probability at least $(1-p)^{2}$.
\end{theorem}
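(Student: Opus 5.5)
The plan is to exploit the exact algebraic identity behind RCLUPPr and treat every rounding error as a perturbation of it. In exact arithmetic $X=P^{\top}LU$ and $Y_{1}=Y_{0}U$, so
\begin{equation}
W=XY_{1}^{-1}=P^{\top}LUU^{-1}Y_{0}^{-1}=P^{\top}LY_{0}^{-1}. \nonumber
\end{equation}
Moreover, $\Omega(LY_{0}^{-1})=H$ has orthonormal columns. By Lemma~\ref{lemma 22} applied to $Z:=LY_{0}^{-1}$, which satisfies $\operatorname{range}(Z)=\operatorname{range}(L)$, all singular values of $Z$ lie in $[(1+\epsilon)^{-1/2},(1-\epsilon)^{-1/2}]$. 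The target constants $1.28/(1-\epsilon)^{1/2}$ and $\frac{4}{5}(1+\epsilon)^{-1/2}-0.08(1-\epsilon)^{-1/2}$ are exactly these values, inflated or deflated by the rounding errors. I read \eqref{eq:nw} as a lower bound on $\sigma_{n}(\hat{W})$, which is what the subsequent analysis needs. The $(1-p)^{2}$ comes from using the embedding property on two fixed subspaces, $\operatorname{range}(\hat{L})$ and $\operatorname{range}(X)$ (or $\operatorname{range}(\hat{W})$).

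\textbf{Step 1: sketching and Householder errors.} Since partial pivoting gives $|\hat{L}_{ij}|\le 1$, Lemma~\ref{lemma mm} with $u_{b}$ gives $\norm{\Delta_{s}}_{F}\le\gamma_{m}\norm{|\Omega||\hat{L}|}_{F}$, which I bound by $t\norm{\hat{L}}_{2}$ up to harmless constants. Lemma~\ref{lemma hqr} gives $\norm{\Delta_{h}}_{F}\le\gamma_{csn}\norm{\hat{L}_{s}}_{F}\le 1.02csn\sqrt{n}u_{b}\,((1+\epsilon)^{1/2}+t)\norm{\hat{L}}_{2}$, using Lemma~\ref{lemma 22} for $\norm{\Omega\hat{L}}$. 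Then $\hat{Y}_{0}=\hat{H}^{\top}(\Omega\hat{L}+\Delta_{s}+\Delta_{h})$ up to the $\mathcal{O}(u_b)$ loss of orthogonality of $\hat{H}$. Lemma~\ref{lemma Weyl} and the first condition in \eqref{eq:b} then give $\sigma_{n}(\hat{Y}_{0})\ge c'(1-\epsilon)^{1/2}\sigma_{n}(\hat{L})$ with $c'$ close to $1$. Consequently, with $E_{1}:=(\Delta_{s}+\Delta_{h})\hat{Y}_{0}^{-1}$,
\begin{equation}
\Omega\hat{Z}=\hat{H}-E_{1}, \quad \hat{Z}=\hat{L}\hat{Y}_{0}^{-1}, \nonumber
\end{equation}
where $\norm{E_{1}}_{2}\lesssim\kappa_{2}(\hat{L})\,(25.5t+25.5csn\sqrt{n}u_{b}((1+\epsilon)^{1/2}+t))/(1-\epsilon)^{1/2}\le 1$-ish, i.e.\ a small fixed fraction. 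Applying Lemma~\ref{lemma 22} backwards then bounds $\norm{\hat{Z}}_{2}$ by roughly $1.1/(1-\epsilon)^{1/2}$ and $\sigma_{n}(\hat{Z})$ from below by roughly $0.9/(1+\epsilon)^{1/2}$. This uses \eqref{eq:a} and Assumption~\ref{assumption:s} to absorb constants.

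\textbf{Step 2: from $\hat{Z}$ to $\hat{W}$.} From \eqref{eq:lu}, \eqref{eq:ey} and \eqref{eq:ex},
\begin{equation}
\hat{W}\hat{Y}_{1}=P^{\top}(\hat{L}\hat{U}-\Delta_{lu})+\Delta_{x}, \quad \hat{Y}_{1}=\hat{Y}_{0}\hat{U}+\Delta_{y}. \nonumber
\end{equation}
Hence
\begin{equation}
\hat{W}=P^{\top}\hat{Z}\,(I+\Delta_{y}\hat{U}^{-1}\hat{Y}_{0}^{-1})^{-1}+(\Delta_{x}-P^{\top}\Delta_{lu})\hat{Y}_{1}^{-1}. \nonumber
\end{equation}
I bound the pieces separately.
\begin{itemize}
\item $\norm{\Delta_{y}}\le\gamma_{n}\sqrt{n}\norm{\hat{Y}_{0}}\norm{\hat{U}}$ (in $u_{b}$), so $\norm{\Delta_{y}\hat{U}^{-1}\hat{Y}_{0}^{-1}}\lesssim n^{2}u_{b}\,\kappa_{2}(\hat{U})\kappa_{2}(\hat{Y}_{0})$. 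Here $\kappa_{2}(\hat{Y}_{0})\lesssim\kappa_{2}(\hat{L})\,((1+\epsilon)^{1/2}+t)/(1-\epsilon)^{1/2}$.
\item $\norm{\Delta_{lu}}\le\gamma_{n}\norm{|\hat{L}||\hat{U}|}$ by Lemma~\ref{lemma lu} (in $u$).
\item $\norm{\hat{Y}_{1}^{-1}}\lesssim 1/(\sigma_{n}(\hat{Y}_{0})\sigma_{n}(\hat{U}))$.
\end{itemize}
All of these are therefore controlled by $dn^{2}u_{c}\,\kappa_{2}(\hat{L})\kappa_{2}(\hat{U})((1+\epsilon)^{1/2}+t)/(1-\epsilon)^{1/2}$. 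The second condition in \eqref{eq:b} makes this at most about $1/23.6$.

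\textbf{Step 3: the main obstacle, $\Delta_{x}$.} The hardest step is $\Delta_{x}$, because Lemma~\ref{lemma system} applied row by row gives $|\Delta_{x}|\le\gamma_{n}|\hat{W}||\hat{Y}_{1}|$. This involves the unknown $\hat{W}$ itself, so $\norm{\Delta_{x}\hat{Y}_{1}^{-1}}\le\gamma_{n}\sqrt{n}\,\kappa_{2}(\hat{Y}_{1})\norm{\hat{W}}_{2}$. I would first write $\norm{\hat{W}}_{2}\le a+b\norm{\hat{W}}_{2}$ with $b\le 1/23.6$-type small, and solve to obtain \eqref{eq:w2}; the $1.28$ is meant to absorb the factors $1/(1-b)$ and $1/(1-\norm{\Delta_{y}\hat{U}^{-1}\hat{Y}_{0}^{-1}}_{2})$. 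Then I substitute this upper bound into the same expansion together with Lemma~\ref{lemma Weyl}. This gives $\sigma_{n}(\hat{W})\ge\sigma_{n}(\hat{Z})(1-\cdot)-0.08/(1-\epsilon)^{1/2}$, yielding \eqref{eq:nw}. Getting the numerical constants $25.5$, $23.6$, $1.28$, $4/5$, $0.08$ to close consistently is the delicate part, and I expect to fine-tune the intermediate bounds on $\hat{Z}$ to match them.
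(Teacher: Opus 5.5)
Your argument is correct (reading \eqref{eq:nw} as a lower bound on $\sigma_n(\hat W)$, which is what the paper's proof actually establishes). It takes a genuinely different route from the paper's.

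The paper never forms $\hat Z$. It expands $\Omega PX\hat{Y}_{1}^{-1}=\hat H-\hat H\Delta_{y}\hat Y_{1}^{-1}-(\Delta_{h}+\Delta_{s})\hat U\hat Y_{1}^{-1}-\Omega\Delta_{lu}\hat Y_{1}^{-1}$ and bounds its distance to $\hat H$ by $0.19$ using Lemmas~\ref{lemma 1}--\ref{lemma 6}. It then invokes the embedding a \emph{second} time, on $\operatorname{range}(PX\hat Y_{1}^{-1})=\operatorname{range}(PX)$, to get $\frac{4}{5}(1+\epsilon)^{-1/2}\le\sigma_{n}(X\hat Y_{1}^{-1})\le\norm{X\hat Y_{1}^{-1}}_{2}\le\frac{6}{5}(1-\epsilon)^{-1/2}$. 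The $\Delta_{x}$ step is then exactly your Step~3.

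You instead use the embedding only on the fixed subspace $\operatorname{range}(\hat L)$, where only $\Delta_{s}$ and $\Delta_{h}$ enter. You then carry $\Delta_{y}$, $\Delta_{lu}$ and $\Delta_{x}$ through the deterministic identity for $\hat W$. Note that $\hat U\hat Y_{1}^{-1}=\hat Y_{0}^{-1}(I+F)^{-1}$ with $F=\Delta_{y}\hat U^{-1}\hat Y_{0}^{-1}$, so your factor $(I+F)^{-1}$ plays the role of the paper's Lemma~\ref{lemma 6}.

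Your route buys two things:
\begin{enumerate}
\item $\Delta_{lu}$ is never multiplied by $\Omega$, so $d=\max(\norm{\Omega}_{2},1)$ in \eqref{eq:b} is only used as $d\ge 1$.
\item A single embedding event suffices, so you actually get probability at least $1-p$. This sidesteps the paper's combination of two events for the same $\Omega$: its $(1-p)^{2}$ treats them as if independent, whereas a union bound would only give $1-2p$.
\end{enumerate}
The paper's route, in exchange, works directly with $X\hat Y_{1}^{-1}$, the exact-arithmetic counterpart of $\hat W$, so one Weyl step finishes.

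Your constants close with room to spare. The individual bounds are $\norm{E_{1}}_{2}\le\frac{1.02}{0.95\cdot 25.5}\approx 0.042$, $\norm{F}_{2}\le\frac{1.06}{0.95\cdot 23.6}\approx 0.048$, $\norm{\Delta_{lu}\hat Y_{1}^{-1}}_{2}\le\frac{1.02\cdot 1.12}{23.6}\approx 0.049$ and $\norm{\Delta_{x}\hat Y_{1}^{-1}}_{2}\le 0.052\norm{\hat W}_{2}$. These give $\norm{\hat W}_{2}\le 1.22(1-\epsilon)^{-1/2}$. Absorbing the additive $0.049$ via $(1+\epsilon)^{1/2}\le\sqrt{2}$, they also give $\sigma_{n}(\hat W)\ge 0.83(1+\epsilon)^{-1/2}-0.07(1-\epsilon)^{-1/2}$.

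Three small fixes are needed.
\begin{enumerate}
\item Your displayed bound on $\norm{E_{1}}_{2}$ has the $25.5$ on the wrong side. The first condition in \eqref{eq:b} makes $\norm{E_{1}}_{2}$ of order $1/25.5$. A bound of merely ``$\le 1$'' would be useless, since you need $\sigma_{n}(\Omega\hat Z)\ge 0.99-\norm{E_{1}}_{2}$ bounded away from zero.
\item The factors $\gamma_{n}\sqrt{n}$ in your bounds for $\Delta_{y}$ and $\Delta_{x}$ should be $\gamma_{n}n$, since $\norm{\abs{A}}_{2}\le\sqrt{n}\norm{A}_{2}$ is applied to both factors. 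This is harmless, because \eqref{eq:b} is stated with $n^{2}$.
\item $\operatorname{range}(\hat W)$ depends on $\Omega$ through $\hat Y_{1}$, so the oblivious embedding property cannot be invoked on it. Fortunately your argument never needs it.
\end{enumerate}
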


Specially, when RCLUPPr is taken with all the steps in the high precision with $\uu=\uu_{b}$ in Assumption~\ref{assumption:1}, \textit{e.g.}, $\uu_{b}=2^{-53}$ in the double precision, Assumption~\ref{assumption:s} and \eqref{eq:b}, we provide a rounding error analysis of RCLUPPr. Similar analysis can be taken for $\uu$ in the low precision.

\begin{theorem}[Rounding error analysis of RCLUPPr in the high precision]
\label{theorem:RCLUPPrd}
With $\uu=\uu_{b}$ in Assumption~\ref{assumption:1}, Assumption~\ref{assumption:s}, \eqref{eq:a} and \eqref{eq:b}, for $X \in \mathbb{R}^{m\times n}$, then for $[\hat{Q},\hat{R}]=\mbox{RCLUPPr}(X)$, we have
\begin{align}
\norm{\hat{Q}^{\top}\hat{Q}-I_{n}}_{F} &\le 6(mn\uu_{b}+n(n+1)\uu_{b}), \label{eq:ortho} \\
\norm{\hat{Q}\hat{R}-X}_{F} &\le \Delta_{res}, \label{eq:res}
\end{align}
with probability at least $(1-p)^{2}$. Here, we have $\Delta_{res}=[\frac{1.31}{(1-\epsilon)^{\frac{1}{2}}}+\frac{1.37}{(1-\epsilon)^{\frac{1}{2}}} \cdot \sqrt{1+1.03j^{2}}+\frac{1.37}{(1-\epsilon)^{\frac{1}{2}}} \cdot \sqrt{1+6j^{2}} \cdot (1+1.03j^{2})] \cdot kn\sqrt{n}\uu_{b}\norm{X}_{F}+[\frac{1.36}{(1-\epsilon)^{\frac{1}{2}}} \cdot \sqrt{1+1.03j^{2}}+\frac{1.37}{(1-\epsilon)^{\frac{1}{2}}} \cdot \sqrt{1+6j^{2}} \cdot (1+1.03j^{2})] \cdot kn^{2}\uu_{b}\norm{X}_{2}$ with $j=(\frac{\frac{(1-\epsilon)^{\frac{1}{2}}}{(1+\epsilon)^{\frac{1}{2}}}-0.1}{12.8})^{2}$ and $k=\frac{1}{\frac{0.8}{(1+\epsilon)^{\frac{1}{2}}}-\frac{0.1}{(1-\epsilon)^{\frac{1}{2}}}}$.
\end{theorem}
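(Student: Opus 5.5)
The plan is to view RCLUPPr as a preconditioning stage that produces a well-conditioned $\hat{W}$, followed by CholeskyQR2 applied to $\hat{W}$. Theorem~\ref{theorem:RCLUPPr} supplies exactly the conditioning information CholeskyQR2 needs, so the remaining work is to feed it into the standard CholeskyQR2 error analysis with $\uu=\uu_{b}$ and then push the residual back through $\hat{R}=\mbox{fl}(\hat{Z}\hat{Y}_{1})$. Note that \eqref{eq:nw} as printed is an upper bound. For the argument I use the lower bound $\sigma_{n}(\hat{W})\ge \frac{4}{5(1+\epsilon)^{1/2}}-\frac{0.08}{(1-\epsilon)^{1/2}}$, which is presumably what the proof of Theorem~\ref{theorem:RCLUPPr} actually establishes via Lemma~\ref{lemma Weyl}. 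Together with \eqref{eq:w2}, this gives $\kappa_{2}(\hat{W})\le 1.28(1+\epsilon)^{1/2}/(0.8(1-\epsilon)^{1/2}-0.08(1+\epsilon)^{1/2})$, a modest constant. The event has probability at least $(1-p)^{2}$, inherited from that theorem.

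For orthogonality, I apply the CholeskyQR2 analysis of \cite{error} (or redo it with Lemmas~\ref{lemma mm}, \ref{lemma lu}, \ref{lemma system}) to $\hat{W}$.
\begin{itemize}
\item The first CholeskyQR gives $\hat{Q}_{1}$ with $\norm{\hat{Q}_{1}^{\top}\hat{Q}_{1}-I}_{2}\le \frac{5}{64}\kappa_{2}(\hat{W})^{2}(mn\uu+n(n+1)\uu)$.
\item The hypothesis $\eta\ge 12.8\sqrt{mn\uu+n(n+1)\uu}+0.1$ in Assumption~\ref{assumption:s}, combined with $\eta\le((1-\epsilon)/(1+\epsilon))^{1/2}$, guarantees $\delta:=8\kappa_{2}(\hat{W})\sqrt{mn\uu+n(n+1)\uu}\le 1$. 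This is the usual sufficient condition of \cite{error}. The constant $j$ in the statement is precisely $\bigl((\frac{1-\epsilon}{1+\epsilon})^{1/2}-0.1\bigr)^{2}/12.8^{2}$, a bound on $mn\uu+n(n+1)\uu$ derived from this condition.
\item Consequently $\hat{Q}_{1}$ has $\norm{\hat{Q}_{1}}_{2}^{2}\le 1+1.03j^{2}$ (more precisely $1+\tfrac{5}{64}\delta^{2}$-type quantities) and $\kappa_{2}(\hat{Q}_{1})$ is close to one.
\item A second CholeskyQR then gives \eqref{eq:ortho} with the constant $6$, exactly as in \cite{error}.
\end{itemize}

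For the residual, I split
\[
\hat{Q}\hat{R}-X=(\hat{Q}\hat{Z}-\hat{W})\hat{Y}_{1}+\hat{Q}(\hat{R}-\hat{Z}\hat{Y}_{1})-\Delta_{x}.
\]
The three terms are handled as follows.
\begin{itemize}
\item \textbf{Third term.} Row-wise, Lemma~\ref{lemma system} gives $\norm{\Delta_{x}}_{F}\le\gamma_{n}\norm{\hat{W}}_{2}\norm{\hat{Y}_{1}}_{F}$-type bounds.
\item \textbf{First term.} This is the CholeskyQR2 residual, roughly $(1+\text{const})\,n\sqrt{n}\,\uu\,\norm{\hat{W}}_{2}$ up to factors of $\sqrt{1+1.03j^{2}}$ and $\sqrt{1+6j^{2}}$, multiplied by $\norm{\hat{Y}_{1}}_{2}$.
\item \textbf{Second term.} By Lemma~\ref{lemma mm} it is at most $\gamma_{n}\norm{\hat{Q}}_{2}\norm{\hat{Z}}_{F}\norm{\hat{Y}_{1}}_{2}$. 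I bound $\norm{\hat{Z}}_{2}\lesssim\norm{\hat{W}}_{2}$ through $\norm{\hat{Q}_{1}}_{2}$-type factors.
\end{itemize}
The key quantity is $\norm{\hat{Y}_{1}}_{2}$. From \eqref{eq:ex} and Lemma~\ref{lemma Weyl}, $\hat{Y}_{1}=\hat{W}^{\dagger}(X+\Delta_{x})$, so
\[
\norm{\hat{Y}_{1}}_{2}\le \frac{\norm{X}_{2}+\norm{\Delta_{x}}_{2}}{\sigma_{n}(\hat{W})}\le k\,(1+O(n\uu))\norm{X}_{2}.
\]
The constant $k$ in $\Delta_{res}$ is exactly $1/\sigma_{n}(\hat{W})$ at the lower bound. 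For Frobenius factors, I use $\norm{\hat{Y}_{1}}_{F}\le k\norm{X}_{F}$ similarly, since $\hat{Y}_{1}$ is computed at working precision.

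Finally I collect terms, using $\norm{\hat{W}}_{2}\le 1.28/(1-\epsilon)^{1/2}$ and $1.02$-inflation factors, to reach the displayed $\Delta_{res}$. The main obstacle is the bookkeeping of constants: the $\sqrt{1+1.03j^{2}}$ and $\sqrt{1+6j^{2}}(1+1.03j^{2})$ factors come from bounding $\norm{\hat{Q}_{1}}_{2}$, $\norm{\hat{Q}}_{2}$ and the second-stage $R$-factor. In the hard step I would first follow the CholeskyQR2 residual proof of \cite{error} line by line with $\hat{W}$ normalized as above, rather than rederive it. The delicate point is the self-referential bound on $\norm{\hat{Y}_{1}}_{2}$, since $\Delta_{x}$ depends on $\hat{Y}_{1}$. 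I would absorb this via $\gamma_{n}n\sqrt{n}\le$ small, which follows from \eqref{eq:a}.
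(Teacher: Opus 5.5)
Your orthogonality argument is the same as the paper's. You read \eqref{eq:nw} as a lower bound, combine it with \eqref{eq:w2} to bound $\kappa_2(\hat W)$, check $\kappa_2(\hat W)\le 1/(8\sqrt{mn\uu_b+n(n+1)\uu_b})$ from Assumption~\ref{assumption:s}, and invoke \cite[Theorem 3.3]{error}. Your absorption of $\Delta_x$ to get $\norm{\hat Y_1}_F\le k\norm{X}_F$ and $\norm{\hat Y_1}_2\le k\norm{X}_2$ also matches.

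The gap is in the residual: the displayed $\Delta_{res}$ does not come out of your decomposition. You form $\hat R=\mbox{fl}(\hat Z\hat Y_1)$ with $\hat Z=\mbox{fl}(\hat Y_4\hat Y_2)$, so your first term expands as
\[
(\hat Q\hat Z-\hat W)\hat Y_1=\Delta_3\hat Y_1+\Delta_7\hat Y_2\hat Y_1+\hat Q\Delta_Z\hat Y_1,\qquad \Delta_Z:=\hat Z-\hat Y_4\hat Y_2 .
\]
Since $\Delta_Z$ is created before $\hat Y_1$ enters, your norm-wise estimates only give $\norm{\hat Q}_2\norm{\Delta_Z}_F\norm{\hat Y_1}_2\approx 1.36\sqrt{1+6j^2}(1+1.03j^2)\,kn^2\uu_b\norm{X}_2/(1-\epsilon)^{\frac12}$. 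This is an $n^2\norm{X}_2$ term with no counterpart in $\Delta_{res}$.

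The paper instead accumulates the $R$-factor as $\hat Y_3=\hat Y_2\hat Y_1+\Delta_4$ and $\hat R=\hat Y_4\hat Y_3+\Delta_8$ (see \eqref{eq:C4}, \eqref{eq:C8}), which gives
\[
\hat Q\hat R=\hat W\hat Y_1+\Delta_3\hat Y_1+\hat V\Delta_4+\Delta_7\hat Y_3+\hat Q\Delta_8 .
\]
$\Delta_{res}$ is exactly the sum of the five term-wise bounds. $\Delta_x$, $\hat V\Delta_4$ and $\hat Q\Delta_8$ are rounding errors of operations whose right operand is $\hat Y_1$ or $\hat Y_3$. They therefore give the three $kn\sqrt n\,\uu_b\norm{X}_F$ coefficients, through $\norm{\hat Y_1}_F,\norm{\hat Y_3}_F=O(k\norm{X}_F)$. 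The terms $\Delta_3\hat Y_1$ and $\Delta_7\hat Y_3$ give the two $kn^2\uu_b\norm{X}_2$ coefficients.

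Your route replaces the $n\sqrt n\norm{X}_F$ contribution of $\hat V\Delta_4$ with the $n^2\norm{X}_2$ contribution of $\hat Q\Delta_Z\hat Y_1$. Since $n\sqrt n\norm{X}_F\le n^2\norm{X}_2$, your total is in general not bounded by $\Delta_{res}$. When $\norm{X}_F\approx\norm{X}_2$ it exceeds $\Delta_{res}$ by a term of order $kn^2\uu_b\norm{X}_2/(1-\epsilon)^{\frac12}$. So the closing ``collect terms'' step does not go through.

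Your ordering is the literal reading of step 7 of Algorithm~\ref{alg:RCLUPPr}, and it yields a valid bound of the same order. To prove the stated constant, however, you must analyze the accumulation order used in the paper's proof. Some smaller bookkeeping points also need fixing:
\begin{itemize}
\item The CholeskyQR2 residual of $\hat W$ is $O(n^2\uu_b\norm{\hat W}_2)$, not $O(n\sqrt n\,\uu_b\norm{\hat W}_2)$; for example, $\norm{\Delta_3}_F\le 1.02n^2\uu_b\norm{\hat V}_2\norm{\hat Y_2}_2$. This is precisely the source of the $n^2\norm{X}_2$ part of $\Delta_{res}$.
\item The factor $\sqrt{1+1.03j^2}$ bounds the Cholesky factors, $\norm{\hat Y_2}_2/\norm{\hat W}_2$ and $\norm{\hat Y_4}_2/\norm{\hat V}_2$, not $\norm{\hat Q_1}_2$. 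The latter (the paper's $\hat V$) is bounded by $\sqrt{69}/8$ via \cite[(3.26)]{error}, and $\norm{\hat Q}_2\le\sqrt{1+6j^2}$ follows from \eqref{eq:ortho}.
\item If $j$ is the squared ratio, as you and the statement define it, Assumption~\ref{assumption:s} only yields $mn\uu_b+n(n+1)\uu_b\le j$. That does not justify factors $1+1.03j^2$. The paper's \eqref{eq:rest1} tacitly uses the unsquared ratio $\bigl((\frac{1-\epsilon}{1+\epsilon})^{\frac12}-0.1\bigr)/12.8$, and you should commit to one convention.
\end{itemize}
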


\subsection{Some lemmas of RCLUPPr}
Firstly, we establish several lemmas required for the theoretical analysis of RCLUPPr. The theoretical results in Lemma~\ref{lemma 1}-Lemma~\ref{lemma 7} all hold with a probability of at least $1-p$ subject to the step in \eqref{eq:es}.

\begin{lemma}[Estimation of $\norm{\Delta_{s}}_{F}$]
\label{lemma 1}
For $\Delta_{s}$ in \eqref{eq:es}, with $t=1.02m\uu_{b} \cdot \sqrt{n}\norm{\Omega}_{F}$, we have
\begin{equation}
\norm{\Delta_{s}}_{F} \le t\norm{\hat{L}}_{2}. \label{eq:dsf}
\end{equation}
\end{lemma}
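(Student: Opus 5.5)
The bound follows from Lemma~\ref{lemma mm} applied to the product $\Omega\hat{L}$ computed with unit roundoff $\uu_{b}$ (Assumption~\ref{assumption:1}). Here $\Omega\in\mathbb{R}^{s\times m}$ and $\hat{L}\in\mathbb{R}^{m\times n}$, so the inner dimension is $m$, and \eqref{eq:es} gives the entrywise bound
\begin{equation}
\abs{\Delta_{s}} \le \gamma_{m}^{(b)}\abs{\Omega}\abs{\hat{L}}, \nonumber
\end{equation}
where $\gamma_{m}^{(b)}=\frac{m\uu_{b}}{1-m\uu_{b}}\le 1.02m\uu_{b}$. This constant is legitimate because \eqref{eq:a} gives $m\sqrt{s}\uu_{b}\le \frac{1}{64}$, hence $m\uu_{b}\le\frac{1}{64}$.

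Next I pass to the Frobenius norm. Entrywise monotonicity of $\norm{\cdot}_{F}$ and submultiplicativity give
\begin{equation}
\norm{\Delta_{s}}_{F}\le 1.02m\uu_{b}\,\norm{\,\abs{\Omega}\abs{\hat{L}}\,}_{F}\le 1.02m\uu_{b}\norm{\Omega}_{F}\norm{\hat{L}}_{F}, \nonumber
\end{equation}
using $\norm{\abs{A}}_{F}=\norm{A}_{F}$. Since $\hat{L}$ has $n$ columns, its rank is at most $n$, so $\norm{\hat{L}}_{F}\le\sqrt{n}\norm{\hat{L}}_{2}$. Combining these yields $\norm{\Delta_{s}}_{F}\le 1.02m\uu_{b}\sqrt{n}\norm{\Omega}_{F}\norm{\hat{L}}_{2}=t\norm{\hat{L}}_{2}$, which is \eqref{eq:dsf}.

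There is essentially no obstacle; the argument is deterministic. The only point needing care is which norm of $\Omega$ to keep. One could instead bound $\norm{\,\abs{\Omega}\abs{\hat{L}}\,}_{F}\le\norm{\abs{\Omega}}_{2}\norm{\hat{L}}_{F}$, but $\norm{\abs{\Omega}}_{2}$ is not controlled by $\norm{\Omega}_{2}$. Placing the Frobenius norm on $\Omega$ and paying the $\sqrt{n}$ on $\hat{L}$ is what produces the stated $t$. The probability qualifier attached to this group of lemmas is not needed here, since nothing about the embedding property of $\Omega$ is used.
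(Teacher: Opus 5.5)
Your proposal is correct and matches the paper's proof step for step: apply Lemma~\ref{lemma mm} with inner dimension $m$ and unit roundoff $\uu_{b}$, pass to Frobenius norms to get $\gamma_{m}\norm{\Omega}_{F}\norm{\hat{L}}_{F}$, then use $\norm{\hat{L}}_{F}\le\sqrt{n}\norm{\hat{L}}_{2}$. Two of your remarks are correct refinements that the paper leaves implicit: justifying $\gamma_{m}\le 1.02m\uu_{b}$ from $m\uu_{b}\le\frac{1}{64}$ via \eqref{eq:a}, and noting that this particular lemma is deterministic.
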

\begin{proof}
For $\Delta_{s}$ in \eqref{eq:es}, with Lemma~\ref{lemma mm} and Assumption~\ref{assumption:1}, $\norm{\Delta_{s}}_{F}$ can be bounded as 
\begin{equation}
\norm{\Delta_{s}}_{F} \le \gamma_{m} \cdot \norm{\Omega}_{F}\norm{\hat{L}}_{F} \le 1.02m\uu_{b} \cdot \norm{\Omega}_{F} \cdot \sqrt{n}\norm{\hat{L}}_{2} = t\norm{\hat{L}}_{2}. \nonumber
\end{equation}
Here, $t=1.02m\uu \cdot \sqrt{n}\norm{\Omega}_{F}$. \eqref{eq:dsf} is proved. Lemma~\ref{lemma 1} holds.
\end{proof}

\begin{lemma}[Estimation of $\norm{\Delta_{h}}_{F}$]
\label{lemma 2}
For $\Delta_{h}$ in \eqref{eq:eh}, we have
\begin{equation}
\norm{\Delta_{h}}_{F} \le 1.02csn\sqrt{n}\uu_{b} \cdot ((1+\epsilon)^{\frac{1}{2}}+t) \cdot \norm{\hat{L}}_{2}. \label{eq:dhf}
\end{equation}
\end{lemma}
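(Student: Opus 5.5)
We apply Lemma~\ref{lemma hqr} to the thin HouseholderQR of $\hat{L_{s}}$ in \eqref{eq:eh}, carried out with unit roundoff $\uu_{b}$ by Assumption~\ref{assumption:1}. Here $\hat{L_{s}}\in\mathbb{R}^{s\times n}$, so the lemma gives
\begin{equation}
\norm{\Delta_{h}}_{F} \le \gamma_{csn}\norm{\hat{L_{s}}}_{F} \le 1.02csn\uu_{b}\norm{\hat{L_{s}}}_{F}. \nonumber
\end{equation}
The estimate $\gamma_{csn}\le 1.02csn\uu_{b}$ follows from Lemma~\ref{lemma mm}, since $csn\uu_{b}\le c_{1}sn\sqrt{n}\uu_{b}\le \frac{1}{64}$ by \eqref{eq:a}. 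We then pass to the spectral norm using the rank bound $\norm{\hat{L_{s}}}_{F}\le\sqrt{n}\norm{\hat{L_{s}}}_{2}$, which holds because $\hat{L_{s}}$ has $n$ columns. This supplies the extra factor $\sqrt{n}$, giving $\norm{\Delta_{h}}_{F}\le 1.02csn\sqrt{n}\uu_{b}\norm{\hat{L_{s}}}_{2}$.

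Next we bound $\norm{\hat{L_{s}}}_{2}$ through \eqref{eq:es}, $\hat{L_{s}}=\Omega\hat{L}+\Delta_{s}$, and the triangle inequality:
\begin{equation}
\norm{\hat{L_{s}}}_{2}\le\norm{\Omega\hat{L}}_{2}+\norm{\Delta_{s}}_{2}. \nonumber
\end{equation}
For the first term, $\Omega$ is an $\epsilon$-subspace embedding for $\operatorname{range}(\hat{L})$ with probability at least $1-p$. This is the probabilistic event under which all of Lemma~\ref{lemma 1}--Lemma~\ref{lemma 7} hold. On that event Lemma~\ref{lemma 22} gives $\norm{\Omega\hat{L}}_{2}\le(1+\epsilon)^{\frac{1}{2}}\norm{\hat{L}}_{2}$. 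For the second term, Lemma~\ref{lemma 1} gives $\norm{\Delta_{s}}_{2}\le\norm{\Delta_{s}}_{F}\le t\norm{\hat{L}}_{2}$. Together these give $\norm{\hat{L_{s}}}_{2}\le((1+\epsilon)^{\frac{1}{2}}+t)\norm{\hat{L}}_{2}$, and substituting into the previous inequality yields \eqref{eq:dhf}.

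The only delicate point is probabilistic rather than computational. The subspace $\operatorname{range}(\hat{L})$ is determined by $X$ through the LUPP step \eqref{eq:lu}, which is done before and independently of drawing $\Omega$. It is therefore a fixed $n$-dimensional subspace in the sense of Definition~\ref{definition 2}, so the oblivious embedding property applies to it. The rest is routine bookkeeping of constants.
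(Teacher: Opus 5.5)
Your proof is correct and follows the paper's argument. You bound $\norm{\hat{L}_{s}}_{2}\le((1+\epsilon)^{\frac{1}{2}}+t)\norm{\hat{L}}_{2}$ via the triangle inequality, Lemma~\ref{lemma 22} and Lemma~\ref{lemma 1}, and apply Lemma~\ref{lemma hqr} with $\norm{\hat{L}_{s}}_{F}\le\sqrt{n}\norm{\hat{L}_{s}}_{2}$, only in the reverse order; you also make explicit two points the paper leaves implicit, namely that $\gamma_{csn}\le 1.02csn\uu_{b}$ follows from \eqref{eq:a} and that $\operatorname{range}(\hat{L})$ is fixed before $\Omega$ is drawn.
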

\begin{proof}
Before bounding $\norm{\Delta_{h}}_{F}$ in \eqref{eq:eh}, we focus on $\hat{L}_{s}$ first. For $\hat{L}_{s}$ in \eqref{eq:es}, with Lemma~\ref{lemma 22}, \eqref{eq:es} and \eqref{eq:dsf}, we can estimate $\norm{\hat{L}_{s}}_{F}$ as
\begin{equation} \label{eq:ls2}
\norm{\hat{L}_{s}}_{2} \le \norm{\Omega \hat{L}}_{2}+\norm{\Delta_{s}}_{F} \le (1+\epsilon)^{\frac{1}{2}} \cdot \norm{\hat{L}}_{2}+t\norm{\hat{L}}_{2} \le ((1+\epsilon)^{\frac{1}{2}}+t) \cdot \norm{\hat{L}}_{2}. 
\end{equation}
Therefore, for $\Delta_{h}$ in \eqref{eq:eh}, with Lemma~\ref{lemma hqr}, Assumption~\ref{assumption:1} and \eqref{eq:ls2}, we can take the bound of $\norm{\Delta_{h}}_{F}$ as
\begin{equation}
\begin{split}
\norm{\Delta_{h}}_{F} \le \gamma_{csn} \cdot \norm{\hat{L}_{s}}_{F} \le 1.02csn\sqrt{n}\uu_{b} \cdot \norm{\hat{L}_{s}}_{2} \le 1.02csn\sqrt{n}\uu_{b} \cdot ((1+\epsilon)^{\frac{1}{2}}+t) \cdot \norm{\hat{L}}_{2}. \nonumber
\end{split}
\end{equation}
\eqref{eq:dhf} is proved. Lemma~\ref{lemma 2} holds.
\end{proof}

\begin{lemma}[Estimation of $\norm{\hat{Y}_{0}}_{2}$]
\label{lemma 3}
For $\hat{Y}_{0}$ in \eqref{eq:eh}, we have
\begin{equation}
\norm{\hat{Y}_{0}}_{2} \le 1.03((1+\epsilon)^{\frac{1}{2}}+t) \cdot \norm{\hat{L}}_{2}. \label{eq:s2}
\end{equation}
\end{lemma}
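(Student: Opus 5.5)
The bound on $\norm{\hat{Y}_{0}}_{2}$ will follow from \eqref{eq:eh}: since $\hat{H}$ is nearly orthonormal, $\hat{Y}_{0}$ has essentially the same spectral norm as $\hat{L}_{s}+\Delta_{h}$. Concretely, I would write $\hat{Y}_{0}=\hat{H}^{\dagger}(\hat{L}_{s}+\Delta_{h})$. It is cleaner, though, to bound via singular values. From Lemma~\ref{lemma hqr} with unit roundoff $\uu_{b}$ (Assumption~\ref{assumption:1}), $\norm{\hat{H}^{\top}\hat{H}-I_{n}}_{F}\le \mathcal{O}(\uu_{b})$. This gives $\sigma_{n}(\hat{H})^{2}\ge 1-\mathcal{O}(\uu_{b})$, and under \eqref{eq:a} we get $\sigma_{n}(\hat{H})\ge$ something like $0.99$ or better. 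Then
\begin{equation}
\sigma_{n}(\hat{H})\,\norm{\hat{Y}_{0}}_{2}\le \norm{\hat{H}\hat{Y}_{0}}_{2}\le \norm{\hat{L}_{s}}_{2}+\norm{\Delta_{h}}_{F}. \nonumber
\end{equation}
This inequality holds because $\norm{\hat{H}y}_{2}\ge\sigma_{n}(\hat{H})\norm{y}_{2}$ for every vector $y$. Here $\hat{H}$ is $s\times n$ with $s\ge n$, so it has full column rank.

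Next I would insert \eqref{eq:ls2} for $\norm{\hat{L}_{s}}_{2}$ and \eqref{eq:dhf} from Lemma~\ref{lemma 2} for $\norm{\Delta_{h}}_{F}$. This yields
\begin{equation}
\norm{\hat{Y}_{0}}_{2}\le \frac{1+1.02csn\sqrt{n}\uu_{b}}{\sigma_{n}(\hat{H})}\,((1+\epsilon)^{\frac{1}{2}}+t)\,\norm{\hat{L}}_{2}. \nonumber
\end{equation}
By \eqref{eq:a}, $c_{1}sn\sqrt{n}\uu_{b}\le 1/64$, so the numerator is at most $1+1.02/64\approx 1.016$.

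The remaining step, and the only real obstacle, is to quantify $\sigma_{n}(\hat{H})$ so that the total constant is at most $1.03$. For this we need $\sigma_{n}(\hat{H})\ge 1.016/1.03\approx 0.9864$. The difficulty is that Lemma~\ref{lemma hqr} only provides $\mathcal{O}(\uu)$ for the orthogonality. I would make this explicit in the standard form $\norm{\hat{H}^{\top}\hat{H}-I_{n}}_{F}\le \gamma_{csn}\sqrt{n}$-type bounds, using the same constant $c$. Then \eqref{eq:a} makes that quantity at most about $1/60$, giving $\sigma_{n}(\hat{H})\ge\sqrt{1-1/60}\approx 0.9916$. The constant is then $1.016/0.9916\approx 1.025\le 1.03$, and \eqref{eq:s2} follows.

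If the orthogonality constant is uncontrolled, I would instead use the alternative view that Householder QR gives an exactly orthonormal $H$ with $H\hat{Y}_{0}=\hat{L}_{s}+\Delta_{h}$, which is the backward-stable form in \cite[Section 18.3]{Higham}. In that form $\norm{\hat{Y}_{0}}_{2}=\norm{\hat{L}_{s}+\Delta_{h}}_{2}$ directly, and the bound $1.016\le 1.03$ is immediate.
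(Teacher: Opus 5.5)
Your proof is correct. It uses the same ingredients as the paper: \eqref{eq:ls2}, \eqref{eq:dhf}, \eqref{eq:a} and the near-orthonormality of $\hat{H}$. The step that removes $\hat{H}$ is handled differently, and yours is the more careful version.

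The paper bounds $\norm{\hat{Y}_{0}}_{2}\le\norm{\hat{H}}_{2}\norm{\hat{L}_{s}}_{2}+\norm{\Delta_{h}}_{F}$ using the upper bound $\norm{\hat{H}}_{2}\le 1.01$ from \eqref{eq:eb}. This tacitly treats $\hat{Y}_{0}$ as $\hat{H}^{\top}(\hat{L}_{s}+\Delta_{h})$, i.e.\ $\hat{H}$ as exactly orthonormal. Read literally, the product $\hat{H}\hat{L}_{s}$ in its first line is only defined when $s=n$. You instead use $\sigma_{n}(\hat{H})\norm{\hat{Y}_{0}}_{2}\le\norm{\hat{H}\hat{Y}_{0}}_{2}$, equivalently $\hat{Y}_{0}=\hat{H}^{\dagger}(\hat{L}_{s}+\Delta_{h})$. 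That is the rigorous way to invert \eqref{eq:eh}, and it gives the constant $(1+1.02/64)/\sigma_{n}(\hat{H})$ in place of $1.01+1.02/64$.

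The ``obstacle'' you flag is resolved in the paper simply by taking $\norm{\hat{H}^{\top}\hat{H}-I_{n}}_{F}\le 0.01$ from the $\mathcal{O}(\uu)$ statement of Lemma~\ref{lemma hqr}, which yields \eqref{eq:eb}. The resulting $\sigma_{n}(\hat{H})\ge 0.99$ already gives $1.016/0.99\approx 1.026\le 1.03$. So you do not need the explicit $\gamma_{csn}\sqrt{n}$-type orthogonality bound. Lemma~\ref{lemma hqr} does not supply that form, and its constant matters here: an extra factor of $2$ would push your constant to about $1.033$.

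Your fallback via an exactly orthonormal $H$ (the backward-error form of Householder QR) is also valid and gives the cleanest constant, $1+1.02/64$. However, the perturbation in that form is not the paper's $\Delta_{h}$, which is defined with the computed $\hat{H}$. You would therefore need its own $\gamma_{csn}$ bound rather than citing \eqref{eq:dhf}.
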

\begin{proof}
Before focusing on $\hat{Y}_{0}$ in \eqref{eq:eh}, we turn to $\hat{H}$ in \eqref{eq:eh} first. With Lemma~\ref{lemma hqr}, we can just take $\norm{\hat{H}^{\top}\hat{H}-I_{n}}_{F} \le 0.01$. Therefore, the following inequality holds for $\sigma_{n}(\hat{H})$ and $\norm{\hat{H}}_{2}$ 
\begin{equation}
0.99 \le \sigma_{n}(\hat{H}) \le \norm{\hat{H}}_{2} \le 1.01. \label{eq:eb}
\end{equation}
Therefore, for $\hat{Y}_{0}$ in \eqref{eq:eh}, with Lemma~\ref{lemma 22}, \eqref{eq:a}, \eqref{eq:dhf}, \eqref{eq:ls2} and \eqref{eq:eb}, $\norm{\hat{Y}_{0}}_{2}$ satisfies
\begin{equation}
\begin{split}
\norm{\hat{Y}_{0}}_{2} &\le \norm{\hat{H}\hat{L}_{s}}_{2}+\norm{\Delta_{h}}_{F} \nonumber \\ &\le \norm{\hat{H}}_{2}\norm{\hat{L}_{s}}_{2}+\norm{\Delta_{h}}_{F} \nonumber \\ &\le 1.01((1+\epsilon)^{\frac{1}{2}}+t) \cdot \norm{\hat{L}}_{2}+1.02csn\sqrt{n}\uu_{b} \cdot ((1+\epsilon)^{\frac{1}{2}}+t) \cdot \norm{\hat{L}}_{2} \nonumber \\ &\le (1.01+1.02csn\sqrt{n}\uu_{b}) \cdot ((1+\epsilon)^{\frac{1}{2}}+t) \cdot \norm{\hat{L}}_{2} \nonumber \\ &\le 1.03((1+\epsilon)^{\frac{1}{2}}+t) \cdot \norm{\hat{L}}_{2}. \nonumber
\end{split}
\end{equation}
\eqref{eq:s2} is proved. Lemma~\ref{lemma 3} holds.
\end{proof}

\begin{lemma}[Estimation of $\norm{\Delta_{y}}_{F}$]
\label{lemma 4}
For $\Delta_{y}$ in \eqref{eq:ey}, we have
\begin{equation}
\norm{\Delta_{y}}_{F} \le 1.06n^{2}\uu_{b} \cdot ((1+\epsilon)^{\frac{1}{2}}+t) \cdot \norm{\hat{L}}_{2}\norm{\hat{U}}_{2}. \label{eq:dyf}
\end{equation}
\end{lemma}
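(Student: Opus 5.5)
Since $\Delta_{y}$ in \eqref{eq:ey} is the rounding error of the product $\hat{Y}_{0}\hat{U}$ of two $n\times n$ matrices, computed with unit roundoff $\uu_{b}$ by Assumption~\ref{assumption:1}, the plan is to invoke Lemma~\ref{lemma mm} with inner dimension $n$ and then convert the entrywise bound into a Frobenius-norm bound that can be expressed through spectral norms. Concretely, Lemma~\ref{lemma mm} gives
\begin{equation}
\abs{\Delta_{y}} \le \gamma_{n}\abs{\hat{Y}_{0}}\abs{\hat{U}}, \nonumber
\end{equation}
so that
\begin{equation}
\norm{\Delta_{y}}_{F} \le \gamma_{n}\norm{\abs{\hat{Y}_{0}}}_{F}\norm{\abs{\hat{U}}}_{F} = \gamma_{n}\norm{\hat{Y}_{0}}_{F}\norm{\hat{U}}_{F}, \nonumber
\end{equation}
by submultiplicativity of the Frobenius norm and the fact that taking absolute values entrywise does not change it.

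Next I will pass to spectral norms. Both $\hat{Y}_{0}$ and $\hat{U}$ are $n\times n$, so $\norm{\hat{Y}_{0}}_{F}\le\sqrt{n}\norm{\hat{Y}_{0}}_{2}$ and $\norm{\hat{U}}_{F}\le\sqrt{n}\norm{\hat{U}}_{2}$. Combined with $\gamma_{n}\le 1.02n\uu_{b}$, this gives $\norm{\Delta_{y}}_{F}\le 1.02n^{2}\uu_{b}\norm{\hat{Y}_{0}}_{2}\norm{\hat{U}}_{2}$. I will then substitute \eqref{eq:s2} from Lemma~\ref{lemma 3}, $\norm{\hat{Y}_{0}}_{2}\le 1.03((1+\epsilon)^{\frac{1}{2}}+t)\norm{\hat{L}}_{2}$, to get the constant $1.02\cdot 1.03 = 1.0506 \le 1.06$. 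This yields \eqref{eq:dyf}.

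There is no real obstacle here. The only points needing care are using $\uu_{b}$ rather than $\uu$, as Assumption~\ref{assumption:1} prescribes for step~4, and checking the numerical constant. The bound $\gamma_{n}\le1.02n\uu_{b}$ holds comfortably under \eqref{eq:a}, since $n\uu_{b}$ is tiny.
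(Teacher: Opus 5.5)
Your proposal is correct and follows the paper's argument: Lemma~\ref{lemma mm} with inner dimension $n$ and unit roundoff $\uu_{b}$, then $\norm{\Delta_{y}}_{F}\le\gamma_{n}\norm{\hat{Y}_{0}}_{F}\norm{\hat{U}}_{F}$, then factors of $\sqrt{n}$ to pass to spectral norms, then substitution of \eqref{eq:s2} with $1.02\cdot 1.03\le 1.06$. Your explicit justification of the entrywise-to-Frobenius step is a bit of extra care that the paper leaves implicit.
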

\begin{proof}
For $\Delta_{y}$ in \eqref{eq:ey}, with Lemma~\ref{lemma mm}, Assumption~\ref{assumption:1} and \eqref{eq:s2}, we can get
\begin{equation}
\begin{split}
\norm{\Delta_{y}}_{F} &\le 1.02n\uu_{b} \cdot \norm{\hat{Y}_{0}}_{F}\norm{\hat{U}}_{F} \le 1.02n\uu_{b} \cdot \sqrt{n}\norm{\hat{Y}_{0}}_{2} \cdot \sqrt{n}\norm{\hat{U}}_{2} \nonumber \\ &\le 1.02n\uu_{b} \cdot \sqrt{n} \cdot 1.03((1+\epsilon)^{\frac{1}{2}}+t) \cdot \norm{\hat{L}}_{2} \cdot \sqrt{n}\norm{\hat{U}}_{2} \nonumber \\ &\le 1.06n^{2}\uu_{b} \cdot ((1+\epsilon)^{\frac{1}{2}}+t) \cdot \norm{\hat{L}}_{2}\norm{\hat{U}}_{2}. \nonumber
\end{split}
\end{equation}
\eqref{eq:dyf} is proved. Lemma~\ref{lemma 4} holds.
\end{proof}

\begin{lemma}[Estimation of $\norm{\hat{Y}_{1}^{-1}}_{2}$]
\label{lemma 5}
For $\hat{Y}_{1}$ in \eqref{eq:ey}, we have
\begin{equation}
\norm{\hat{Y}_{1}^{-1}}_{2} \le \frac{1.12}{(1-\epsilon)^{\frac{1}{2}}} \cdot \frac{1}{\sigma_{n}(\hat{L})\sigma_{n}(\hat{U})}. \label{eq:y-12}
\end{equation}
\end{lemma}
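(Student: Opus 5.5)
Since $\norm{\hat{Y}_{1}^{-1}}_{2}=1/\sigma_{n}(\hat{Y}_{1})$, the plan is to bound $\sigma_{n}(\hat{Y}_{1})$ from below by Lemma~\ref{lemma Weyl} applied to \eqref{eq:ey}:
\begin{equation}
\sigma_{n}(\hat{Y}_{1}) \ge \sigma_{n}(\hat{Y}_{0}\hat{U})-\norm{\Delta_{y}}_{F} \ge \sigma_{n}(\hat{Y}_{0})\sigma_{n}(\hat{U})-\norm{\Delta_{y}}_{F}. \nonumber
\end{equation}
The term $\norm{\Delta_{y}}_{F}$ is already controlled by \eqref{eq:dyf}. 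What remains is a lower bound of the form $\sigma_{n}(\hat{Y}_{0}) \ge \alpha (1-\epsilon)^{\frac{1}{2}}\sigma_{n}(\hat{L})$ with $\alpha$ close to $1$.

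To get the lower bound on $\sigma_{n}(\hat{Y}_{0})$, rewrite \eqref{eq:eh} as $\hat{Y}_{0}=\hat{H}^{\dagger}(\hat{L}_{s}+\Delta_{h})$; since $\hat{H}$ has full column rank by \eqref{eq:eb}, this gives $\sigma_{n}(\hat{Y}_{0}) \ge \sigma_{n}(\hat{H}\hat{Y}_{0})/\norm{\hat{H}}_{2} \ge \sigma_{n}(\hat{L}_{s}+\Delta_{h})/1.01$. Next, apply Lemma~\ref{lemma Weyl} twice together with \eqref{eq:es}:
\begin{equation}
\sigma_{n}(\hat{L}_{s}+\Delta_{h}) \ge \sigma_{n}(\Omega\hat{L})-\norm{\Delta_{s}}_{F}-\norm{\Delta_{h}}_{F}. \nonumber
\end{equation}
Lemma~\ref{lemma 22} gives $\sigma_{n}(\Omega\hat{L}) \ge (1-\epsilon)^{\frac{1}{2}}\sigma_{n}(\hat{L})$. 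Lemma~\ref{lemma 1} and Lemma~\ref{lemma 2} bound the two error terms by $\bigl(t+1.02csn\sqrt{n}\uu_{b}((1+\epsilon)^{\frac{1}{2}}+t)\bigr)\norm{\hat{L}}_{2}$. Writing $\norm{\hat{L}}_{2}=\kappa_{2}(\hat{L})\sigma_{n}(\hat{L})$ and using the first condition in \eqref{eq:b}, this error is at most $\frac{1}{25}(1-\epsilon)^{\frac{1}{2}}\sigma_{n}(\hat{L})$. Hence
\begin{equation}
\sigma_{n}(\hat{Y}_{0}) \ge \frac{24}{25\cdot 1.01}(1-\epsilon)^{\frac{1}{2}}\sigma_{n}(\hat{L}) \ge 0.95(1-\epsilon)^{\frac{1}{2}}\sigma_{n}(\hat{L}). \nonumber
\end{equation}

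For the $\Delta_{y}$ term, write $\norm{\hat{L}}_{2}\norm{\hat{U}}_{2}=\kappa_{2}(\hat{L})\kappa_{2}(\hat{U})\sigma_{n}(\hat{L})\sigma_{n}(\hat{U})$ and invoke the second condition in \eqref{eq:b} with $d\ge1$. This turns \eqref{eq:dyf} into $\norm{\Delta_{y}}_{F} \le \frac{1.06}{23.6}(1-\epsilon)^{\frac{1}{2}}\sigma_{n}(\hat{L})\sigma_{n}(\hat{U}) \approx 0.045(1-\epsilon)^{\frac{1}{2}}\sigma_{n}(\hat{L})\sigma_{n}(\hat{U})$. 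Combining,
\begin{equation}
\sigma_{n}(\hat{Y}_{1}) \ge 0.90(1-\epsilon)^{\frac{1}{2}}\sigma_{n}(\hat{L})\sigma_{n}(\hat{U}). \nonumber
\end{equation}
Inverting gives a constant of about $1.11 \le 1.12$, which is \eqref{eq:y-12}.

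The main obstacle is the power of $(1-\epsilon)^{\frac{1}{2}}$: as written, \eqref{eq:y-12} carries $(1-\epsilon)^{-\frac{1}{2}}$, which matches the bound derived above. The delicate part is the constant bookkeeping. One has to check that $25.5$ and $23.6$ in \eqref{eq:b}, together with $1.01$ from \eqref{eq:eb}, actually produce $1.12$. If the margins are too tight, I would first replace $1.01$ by the sharper bound on $\norm{\hat{H}}_{2}$ available from Lemma~\ref{lemma hqr} and \eqref{eq:a}. I would also keep $t$ explicitly in the combination rather than absorbing it.
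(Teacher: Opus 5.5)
Your proposal is correct and matches the paper's proof step for step. You apply Weyl to $\hat{Y}_{1}=\hat{Y}_{0}\hat{U}+\Delta_{y}$. You get $\sigma_{n}(\hat{Y}_{0})\ge 0.95(1-\epsilon)^{\frac{1}{2}}\sigma_{n}(\hat{L})$ from $\norm{\hat{H}}_{2}\le 1.01$, Lemma~\ref{lemma 22}, Lemmas~\ref{lemma 1}--\ref{lemma 2} and the first condition in \eqref{eq:b}. You then absorb $\norm{\Delta_{y}}_{F}$ with the second condition in \eqref{eq:b}, reaching $0.9$. Your margin $0.96/1.01-1.06/23.6\approx 0.905$ holds, so no sharpening is needed.

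Two small remarks. The pseudo-inverse is unnecessary, since $\sigma_{n}(\hat{H}\hat{Y}_{0})\le\norm{\hat{H}}_{2}\sigma_{n}(\hat{Y}_{0})$ holds directly. As in the paper, the bound holds only with probability at least $1-p$, because it relies on $\Omega$ embedding $\operatorname{range}(\hat{L})$.
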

\begin{proof}
For $\hat{Y}_{1}$ in \eqref{eq:ey}, with \eqref{eq:a}, \eqref{eq:s2} and \eqref{eq:dyf}, we can get
\begin{equation} \label{eq:y2}
\begin{split}
\norm{\hat{Y}_{1}}_{2} &\le \norm{\hat{Y}_{0}}_{2}\norm{\hat{U}}_{2}+\norm{\Delta_{y}}_{F} \\ &\le 1.03 \cdot ((1+\epsilon)^{\frac{1}{2}}+t) \cdot \norm{\hat{L}}_{2} \norm{\hat{U}}_{2} + 1.06n^{2}\uu_{b} \cdot ((1+\epsilon)^{\frac{1}{2}}+t) \cdot \norm{\hat{L}}_{2}\norm{\hat{U}}_{2} \\ &\le 1.05 \cdot ((1+\epsilon)^{\frac{1}{2}}+t) \cdot \norm{\hat{L}}_{2} \norm{\hat{U}}_{2}.
\end{split}
\end{equation}
For $A, B \in \mathbb{R}^{n\times n}$, $\sigma_{n}(AB) \ge \sigma_{n}(A)\sigma_{n}(B)$ holds. With Lemma~\ref{lemma Weyl} and \eqref{eq:ey}, $\sigma_{n}(\hat{Y}_{1})$ satisfies
\begin{equation} \label{eq:ny}
\sigma_{n}(\hat{Y}_{1}) = \sigma_{n}(\hat{Y}_{0}\hat{U}+\Delta_{y}) \ge \sigma_{n}(\hat{Y}_{0})\sigma_{n}(\hat{U})-\norm{\Delta_{y}}_{F}. 
\end{equation}
For $A \in \mathbb{R}^{s\times n}$ and $B \in \mathbb{R}^{n\times n}$, $\sigma_{n}(AB) \le \norm{A}_{2}\sigma_{n}(B)$ holds. Therefore, for $\hat{Y}_{0}$ in \eqref{eq:ny}, with Lemma~\ref{lemma Weyl}, \eqref{eq:eh} and \eqref{eq:eb}, it is clear to see that
\begin{equation} \label{eq:ns1}
\norm{\hat{H}}_{2}\sigma_{n}(\hat{Y}_{0}) \ge \sigma_{n}(\hat{H}\hat{Y}_{0}) =\sigma_{n}(\hat{L_{s}}+\Delta_{h}) \ge \sigma_{n}(\hat{L_{s}})-\norm{\Delta_{h}}_{F}. 
\end{equation}
With \eqref{eq:eb} and \eqref{eq:ns1}, 
\begin{equation} \label{eq:ns}
\sigma_{n}(\hat{Y}_{0}) \ge \frac{\sigma_{n}(\hat{L_{s}})-\norm{\Delta_{h}}_{F}}{\norm{\hat{H}}_{2}} \ge \frac{\sigma_{n}(\hat{L_{s}})-\norm{\Delta_{h}}_{F}}{1.01}. 
\end{equation}
For $\hat{L_{s}}$ in \eqref{eq:es}, with Lemma~\ref{lemma Weyl} and Lemma~\ref{lemma 22}, we can have
\begin{equation} \label{eq:nls}
\begin{split}
\sigma_{n}(\hat{L}_{s}) = \sigma_{n}(\Omega\hat{L}+\Delta_{s}) \ge \sigma_{n}(\Omega\hat{L})-\norm{\Delta_{s}}_{F} \ge (1-\epsilon)^{\frac{1}{2}} \cdot \sigma_{n}(\hat{L})-\norm{\Delta_{s}}_{F}.
\end{split}
\end{equation}
Therefore, we put \eqref{eq:dsf} into \eqref{eq:nls} and we can take the bound of $\sigma_{n}(\hat{L_{s}})$ as
\begin{equation} \label{eq:nls2}
\begin{split}
\sigma_{n}(\hat{L}_{s}) \ge \sigma_{n}(\Omega\hat{L})-\norm{\Delta_{s}}_{F} \ge (1-\epsilon)^{\frac{1}{2}} \cdot \sigma_{n}(\hat{L})-t\norm{\hat{L}}_{2}. 
\end{split}
\end{equation}
With \eqref{eq:a}, \eqref{eq:b}, \eqref{eq:dhf}, \eqref{eq:eb}, \eqref{eq:ns1}, \eqref{eq:ns} and \eqref{eq:nls2}, we bound $\sigma_{n}(\hat{Y}_{0})$ as
\begin{equation} \label{eq:nhs}
\begin{split}
\sigma_{n}(\hat{Y}_{0}) &\ge \frac{\sigma_{n}(\hat{L_{s}})-\norm{\Delta_{h}}_{F}}{\norm{\hat{H}}_{2}} \\ &\ge \frac{(1-\epsilon)^{\frac{1}{2}} \cdot \sigma_{n}(\hat{L})-t\norm{\hat{L}}_{2}-\norm{\Delta_{h}}_{F}}{1.01} \\ &\ge \frac{(1-\epsilon)^{\frac{1}{2}} \cdot \sigma_{n}(\hat{L})-t\norm{\hat{L}}_{2}-1.02csn\sqrt{n}\uu_{b} \cdot ((1+\epsilon)^{\frac{1}{2}}+t) \cdot \norm{\hat{L}}_{2}}{1.01} \\ &\ge \frac{(1-\epsilon)^{\frac{1}{2}} \cdot \sigma_{n}(\hat{L})-[(1+1.02csn\sqrt{n}\uu_{b}) \cdot t+1.02csn\sqrt{n}\uu_{b} \cdot (1+\epsilon)^{\frac{1}{2}}] \cdot \norm{\hat{L}}_{2}}{1.01} \\ &\ge \frac{(1-\epsilon)^{\frac{1}{2}} \cdot \sigma_{n}(\hat{L})-(1.02t+1.02csn\sqrt{n}\uu_{b} \cdot (1+\epsilon)^{\frac{1}{2}}) \cdot \norm{\hat{L}}_{2}}{1.01} \\ &\ge 0.95 \cdot (1-\epsilon)^{\frac{1}{2}} \cdot \sigma_{n}(\hat{L}).
\end{split}
\end{equation}
With \eqref{eq:b}, \eqref{eq:dyf}, \eqref{eq:ny} and \eqref{eq:nhs}, $\sigma_{n}(\hat{Y}_{1})$ has the lower bound of
\begin{equation} \label{eq:ny2}
\begin{split}
\sigma_{n}(\hat{Y}_{1}) &\ge \sigma_{n}(\hat{Y}_{0})\sigma_{n}(\hat{U})-\norm{\Delta_{y}}_{F} \\ &\ge 0.95 \cdot (1-\epsilon)^{\frac{1}{2}} \cdot \sigma_{n}(\hat{L})\sigma_{n}(\hat{U})-1.05n^{2}\uu_{b} \cdot ((1+\epsilon)^{\frac{1}{2}}+t) \cdot \norm{\hat{L}}_{2}\norm{\hat{U}}_{2} \\  &\ge 0.9 \cdot (1-\epsilon)^{\frac{1}{2}} \cdot \sigma_{n}(\hat{L})\sigma_{n}(\hat{U}).
\end{split}
\end{equation}
With \eqref{eq:ny2}, we can get an upper bound of $\norm{\hat{Y}_{1}^{-1}}_{2}$ as 
\begin{equation}
\norm{\hat{Y}_{1}^{-1}}_{2} = \frac{1}{\sigma_{n}(\hat{Y}_{1})} \le \frac{1.12}{(1-\epsilon)^{\frac{1}{2}}} \cdot \frac{1}{\sigma_{n}(\hat{L})\sigma_{n}(\hat{U})}. \nonumber
\end{equation}
\eqref{eq:y-12} is proved. Lemma~\ref{lemma 5} holds.
\end{proof}

\begin{lemma}[Estimation of $\norm{\hat{U}\hat{Y}_{1}^{-1}}_{2}$]
\label{lemma 6}
For $\hat{U}\hat{Y}_{1}^{-1}$ in \eqref{eq:ey}, we have
\begin{equation} \label{eq:uy-12}
\norm{\hat{U}\hat{Y}_{1}^{-1}}_{2} \le \frac{1.12}{(1-\epsilon)^{\frac{1}{2}}} \cdot \frac{1}{\sigma_{n}(\hat{L})}.
\end{equation}
\end{lemma}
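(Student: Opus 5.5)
Since $\hat{U}$ is nonsingular, write $\hat{U}\hat{Y}_{1}^{-1} = (\hat{Y}_{1}\hat{U}^{-1})^{-1}$. Then $\norm{\hat{U}\hat{Y}_{1}^{-1}}_{2} = 1/\sigma_{n}(\hat{Y}_{1}\hat{U}^{-1})$, and it suffices to show
\begin{equation}
\sigma_{n}(\hat{Y}_{1}\hat{U}^{-1}) \ge 0.9(1-\epsilon)^{\frac{1}{2}}\sigma_{n}(\hat{L}). \nonumber
\end{equation}
Multiplying \eqref{eq:ey} on the right by $\hat{U}^{-1}$ gives
\begin{equation}
\hat{Y}_{1}\hat{U}^{-1} = \hat{Y}_{0} + \Delta_{y}\hat{U}^{-1}. \nonumber
\end{equation}
So the factor $\hat{U}$ has been removed from the leading term, and we can work with $\hat{Y}_{0}$ directly.

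Lemma~\ref{lemma Weyl} then gives
\begin{equation}
\sigma_{n}(\hat{Y}_{1}\hat{U}^{-1}) \ge \sigma_{n}(\hat{Y}_{0}) - \norm{\Delta_{y}}_{F}\norm{\hat{U}^{-1}}_{2}. \nonumber
\end{equation}
The first term is already bounded below in \eqref{eq:nhs} by $0.95(1-\epsilon)^{\frac{1}{2}}\sigma_{n}(\hat{L})$. For the second term, \eqref{eq:dyf} together with $\norm{\hat{U}^{-1}}_{2} = 1/\sigma_{n}(\hat{U})$ gives
\begin{equation}
\norm{\Delta_{y}}_{F}\norm{\hat{U}^{-1}}_{2} \le 1.06n^{2}\uu_{b}((1+\epsilon)^{\frac{1}{2}}+t)\norm{\hat{L}}_{2}\kappa_{2}(\hat{U}). \nonumber
\end{equation}

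Next we compare this perturbation with $\sigma_{n}(\hat{L})$. Write $\norm{\hat{L}}_{2} = \kappa_{2}(\hat{L})\sigma_{n}(\hat{L})$. By the second condition in \eqref{eq:b}, using $d \ge 1$ and $\uu_{c} \ge \uu_{b}$ (with $\uu_{c}=\uu_{b}$ here),
\begin{equation}
1.06n^{2}\uu_{b}((1+\epsilon)^{\frac{1}{2}}+t)\kappa_{2}(\hat{L})\kappa_{2}(\hat{U}) \le \frac{1.06}{23.6}(1-\epsilon)^{\frac{1}{2}} \le 0.045(1-\epsilon)^{\frac{1}{2}}. \nonumber
\end{equation}
Hence
\begin{equation}
\sigma_{n}(\hat{Y}_{1}\hat{U}^{-1}) \ge (0.95-0.045)(1-\epsilon)^{\frac{1}{2}}\sigma_{n}(\hat{L}) \ge 0.9(1-\epsilon)^{\frac{1}{2}}\sigma_{n}(\hat{L}). \nonumber
\end{equation}
Inverting and using $1/0.9 \le 1.12$ yields \eqref{eq:uy-12}.

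The main point requiring care is the first step. The naive bound $\norm{\hat{U}}_{2}\norm{\hat{Y}_{1}^{-1}}_{2}$ obtained from Lemma~\ref{lemma 5} would introduce an unwanted factor $\kappa_{2}(\hat{U})$, so the factor $\hat{U}$ must be cancelled algebraically via $\hat{Y}_{1}\hat{U}^{-1}$ instead. After that, the remaining work is checking that the constants from \eqref{eq:b} close the estimate, which they do by the computation above. The result holds with the same probability $1-p$ as the earlier lemmas.
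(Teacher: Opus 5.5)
Your proof is correct, but it takes a slightly different route from the paper's. The paper never forms $\hat{U}^{-1}$. It multiplies \eqref{eq:ey} by $\hat{Y}_{0}^{-1}$ on the left and by $\hat{Y}_{1}^{-1}$ on the right to get the identity $\hat{Y}_{0}^{-1}=\hat{U}\hat{Y}_{1}^{-1}+\hat{Y}_{0}^{-1}\Delta_{y}\hat{Y}_{1}^{-1}$. It then applies the triangle inequality $\norm{\hat{U}\hat{Y}_{1}^{-1}}_{2}\le\norm{\hat{Y}_{0}^{-1}}_{2}(1+\norm{\Delta_{y}}_{F}\norm{\hat{Y}_{1}^{-1}}_{2})$. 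Here $\norm{\hat{Y}_{0}^{-1}}_{2}\le 1.06/((1-\epsilon)^{\frac{1}{2}}\sigma_{n}(\hat{L}))$ comes from \eqref{eq:nhs}, and $\norm{\hat{Y}_{1}^{-1}}_{2}$ is controlled by the just-proved bound \eqref{eq:y-12} of Lemma~\ref{lemma 5}. The factor $\kappa_{2}(\hat{L})\kappa_{2}(\hat{U})$ then appears in the product $\norm{\Delta_{y}}_{F}\norm{\hat{Y}_{1}^{-1}}_{2}$ and is absorbed by \eqref{eq:b}, giving about $1.06(1+1.06\cdot 1.12/23.6)\approx 1.113\le 1.12$.

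Your version writes $\hat{U}\hat{Y}_{1}^{-1}=(\hat{Y}_{0}+\Delta_{y}\hat{U}^{-1})^{-1}$ and applies Lemma~\ref{lemma Weyl}. It is self-contained: it uses only \eqref{eq:nhs}, \eqref{eq:dyf} and \eqref{eq:b}, and does not depend on Lemma~\ref{lemma 5}. Your lower bound on $\sigma_{n}(\hat{Y}_{1}\hat{U}^{-1})$ even yields the nonsingularity of $\hat{Y}_{1}$ for free. It also mirrors the Weyl-type argument the paper uses for $\sigma_{n}(\hat{Y}_{1})$, with a marginally better constant ($1/0.905\approx 1.105$). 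The paper's route instead recycles an estimate it has already established and avoids introducing $\hat{U}^{-1}$ explicitly. Invertibility of $\hat{U}$ is still implicit in both arguments, through the finiteness of $\kappa_{2}(\hat{U})$ in \eqref{eq:b}.
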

\begin{proof}
For $\hat{Y}_{0}$ in \eqref{eq:ey}, with \eqref{eq:nhs}, we can take the bound of $\norm{\hat{Y}_{0}^{-1}}_{2}$ as
\begin{equation} \label{eq:s-12}
\norm{\hat{Y}_{0}^{-1}}_{2} = \frac{1}{\sigma_{n}(\hat{Y}_{0})} \le \frac{1.06}{(1-\epsilon)^{\frac{1}{2}}} \cdot \frac{1}{\sigma_{n}(\hat{L})}.
\end{equation}
For $\hat{U}\hat{Y}_{1}^{-1}$ in \eqref{eq:ey}, we have 
$\hat{Y}_{0}^{-1}=\hat{U}\hat{Y}_{1}^{-1}+\hat{Y}_{0}^{-1}\Delta_{y}\hat{Y}_{1}^{-1}$. Therefore, with \eqref{eq:b}, \eqref{eq:dyf}, \eqref{eq:y-12} and \eqref{eq:s-12}, we bound $\norm{\hat{U}\hat{Y}_{1}^{-1}}_{2}$ as
\begin{equation}
\begin{split}
\norm{\hat{U}\hat{Y}_{1}^{-1}}_{2} &\le \norm{\hat{Y}_{0}^{-1}}_{2}+\norm{\hat{Y}_{0}^{-1}}_{2}\norm{\Delta_{y}}_{F}\norm{\hat{Y}_{1}^{-1}}_{2} \nonumber \\ &\le \frac{1.06}{(1-\epsilon)^{\frac{1}{2}}} \cdot \frac{1}{\sigma_{n}(\hat{L})} \cdot 1.06n^{2}\uu_{b} \cdot ((1+\epsilon)^{\frac{1}{2}}+t) \cdot \norm{\hat{L}}_{2}\norm{\hat{U}}_{2} \cdot \frac{1.12}{(1-\epsilon)^{\frac{1}{2}}} \cdot \frac{1}{\sigma_{n}(\hat{L})\sigma_{n}(\hat{U})} \nonumber \\ &+ \frac{1.06}{(1-\epsilon)^{\frac{1}{2}}} \cdot \frac{1}{\sigma_{n}(\hat{L})} \\ &\le \frac{1.12}{(1-\epsilon)^{\frac{1}{2}}} \cdot \frac{1}{\sigma_{n}(\hat{L})}. \nonumber
\end{split}
\end{equation}
\eqref{eq:uy-12} is proved. Lemma~\ref{lemma 6} holds.
\end{proof}

\begin{lemma}[Estimation of $\norm{\Delta_{x}}_{F}$]
\label{lemma 7}
For $\Delta_{x}$ in \eqref{eq:ex}, we have
\begin{equation}
\norm{\Delta x}_{F} \le 1.08n^{2}\uu \cdot ((1+\epsilon)^{\frac{1}{2}}+t) \cdot \norm{\hat{L}}_{2}\norm{\hat{U}}_{2}\norm{\hat{W}}_{2}. \label{eq:dxf}
\end{equation}
\end{lemma}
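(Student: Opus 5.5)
The plan is to bound $\Delta_{x}$ row by row with Lemma~\ref{lemma system}, since step 5 of Algorithm~\ref{alg:RCLUPPr} forms $\hat{W}$ by triangular substitution against $\hat{Y}_{1}$. Under Assumption~\ref{assumption:1} this step runs in unit roundoff $\uu$. Each row $\hat{w}_{i}^{\top}$ of $\hat{W}$ solves $w^{\top}\hat{Y}_{1}=x_{i}^{\top}$, where $x_{i}^{\top}$ is the $i$-th row of $X$, so Lemma~\ref{lemma system} applied to the transposed system $\hat{Y}_{1}^{\top}w=x_{i}$ gives
\begin{equation}
\hat{w}_{i}^{\top}(\hat{Y}_{1}+\Delta\hat{Y}_{1,i})=x_{i}^{\top}, \quad \abs{\Delta\hat{Y}_{1,i}}\le\gamma_{n}\abs{\hat{Y}_{1}}. \nonumber
\end{equation}
Stacking the rows and comparing with \eqref{eq:ex} identifies the $i$-th row of $\Delta_{x}$ as $-\hat{w}_{i}^{\top}\Delta\hat{Y}_{1,i}$.

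Next I bound each row in norm and sum. For each $i$,
\begin{equation}
\norm{\hat{w}_{i}^{\top}\Delta\hat{Y}_{1,i}}_{2}\le\norm{\hat{w}_{i}}_{2}\norm{\Delta\hat{Y}_{1,i}}_{F}\le\gamma_{n}\norm{\hat{w}_{i}}_{2}\norm{\hat{Y}_{1}}_{F}. \nonumber
\end{equation}
Summing the squares over $i$ gives $\norm{\Delta_{x}}_{F}\le\gamma_{n}\norm{\hat{W}}_{F}\norm{\hat{Y}_{1}}_{F}$. Then I use $\norm{\hat{W}}_{F}\le\sqrt{n}\norm{\hat{W}}_{2}$ and $\norm{\hat{Y}_{1}}_{F}\le\sqrt{n}\norm{\hat{Y}_{1}}_{2}$, which yields
\begin{equation}
\norm{\Delta_{x}}_{F}\le 1.02n^{2}\uu\norm{\hat{Y}_{1}}_{2}\norm{\hat{W}}_{2}. \nonumber
\end{equation}

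Finally I substitute \eqref{eq:y2}, $\norm{\hat{Y}_{1}}_{2}\le1.05((1+\epsilon)^{\frac{1}{2}}+t)\norm{\hat{L}}_{2}\norm{\hat{U}}_{2}$, into this bound. The constant becomes $1.02\cdot1.05\approx1.071\le1.08$, which gives \eqref{eq:dxf}.

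The main care point is the row-wise bookkeeping. Each row of $\hat{W}$ comes with its own perturbation $\Delta\hat{Y}_{1,i}$, so there is no single $\Delta\hat{Y}_{1}$. This is why the Frobenius estimate has to go through the sum of squared row norms rather than a single matrix product. Everything else follows directly from the lemmas already established, together with the $\gamma_{n}\le1.02n\uu$ estimate from Lemma~\ref{lemma mm}, which is valid under \eqref{eq:a}.
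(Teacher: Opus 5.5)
Your proposal is correct and follows the paper's proof essentially step for step: row-wise application of Lemma~\ref{lemma system} to the transposed triangular systems, the bound $\norm{\Delta_{x}}_{F}\le\gamma_{n}\norm{\hat{W}}_{F}\norm{\hat{Y}_{1}}_{F}$ via summed squared row norms, conversion to spectral norms with the factors $\sqrt{n}$, and substitution of \eqref{eq:y2} using $1.02\cdot1.05\le1.08$. Your explicit row-dependent perturbations $\Delta\hat{Y}_{1,i}$ are actually more careful than the paper's single $\Delta\hat{Y}_{1}$. One small quibble: \eqref{eq:a} constrains $\uu_{b}$, not $\uu$, so in the mixed-precision setting it does not justify $\gamma_{n}\le1.02n\uu$; that estimate should be taken directly from Lemma~\ref{lemma mm} as stated, which is what the paper does.
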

\begin{proof}
For $\Delta_{x}$ in \eqref{eq:ex}, we replace $T$ with $\hat{Y}_{1}$, $x$ with $\hat{w}_{i}^{\top}$ and $b$ with $x_{i}^{\top}$ in Lemma~\ref{lemma system}. Here, $\hat{w}_{i}^{\top}$ and $x_{i}^{\top}$ denote the $i$-th column of $\hat{W}$ and $X$. Based on \eqref{eq:ex}, we can have $\abs{\Delta_{xi}^{\top}}=\abs{\hat{w}_{i}^{\top}\Delta \hat{Y}_{1}}$. Here, $\Delta_{xi}^{\top}$ is the $i$-th column of $\Delta_{x}$ in \eqref{eq:ex} with $\Delta \hat{Y}_{1}$ corresponding to $\Delta T$ in Lemma~\ref{lemma system}. With Lemma~\ref{lemma system} and Assumption~\ref{assumption:1}, we can have
\begin{equation} \label{eq:dxf1}
\begin{split}
\norm{\Delta_{x}}_{F} &= \sqrt{\sum_{i=1}^{m}\norm{\abs{\hat{w}_{i}^{\top}\Delta \hat{Y}_{1}}}_{F}^{2}} \\ &\le \gamma_{n} \cdot \norm{\hat{W}}_{F}\norm{\hat{Y}_{1}}_{F} \le 1.02n\uu \cdot \sqrt{n}\norm{\hat{W}}_{2} \cdot \sqrt{n}\norm{\hat{Y}_{1}}_{2} \\ &= 1.02n^{2}\uu \cdot \norm{\hat{W}}_{2}\norm{\hat{Y}_{1}}_{2}.
\end{split}
\end{equation}
Therefore, we put \eqref{eq:y2} into \eqref{eq:dxf1} and we can bound $\norm{\Delta x}_{F}$ as
\begin{equation}
\begin{split}
\norm{\Delta_{x}}_{F} &\le 1.02n^{2}\uu \cdot \norm{\hat{W}}_{2}\norm{\hat{Y}_{1}}_{2} \\ &\le 1.02n^{2}\uu \cdot \norm{\hat{W}}_{2} \cdot 1.05 \cdot ((1+\epsilon)^{\frac{1}{2}}+t) \cdot \norm{\hat{L}}_{2} \norm{\hat{U}}_{2} \\ &\le 1.08n^{2}\uu \cdot ((1+\epsilon)^{\frac{1}{2}}+t) \cdot \norm{\hat{L}}_{2}\norm{\hat{U}}_{2}\norm{\hat{W}}_{2}. \nonumber
\end{split}
\end{equation}
\eqref{eq:dxf} is proved. Lemma~\ref{lemma 7} holds
\end{proof}

\subsection{Proof of Theorem~\ref{theorem:RCLUPPr}}
With the previous lemmas, we prove Theorem~\ref{theorem:RCLUPPr}.

\begin{proof}
In the following, we prove \eqref{eq:w2} and \eqref{eq:nw} for Theorem~\ref{theorem:RCLUPPr}.

We start with bounding $\norm{X\hat{Y}_{1}^{-1}}_{2}$ and $\norm{\hat{W}-X\hat{Y}_{1}^{-1}}_{2}$ in \eqref{eq:ex} first. For $X\hat{Y}_{1}^{-1}$ in \eqref{eq:ex}, with \eqref{eq:lu}-\eqref{eq:ex}, we can have
\begin{equation} \label{eq:zheng}
\begin{split}
\Omega PX\hat{Y}_{1}^{-1} &= \Omega \hat{L}\hat{U}\hat{Y}_{1}^{-1}-\Omega \Delta_{lu}\hat{Y}_{1}^{-1} \\ &= (\hat{L}_{s}-\Delta_{s})\hat{U}\hat{Y}_{1}^{-1}-\Omega \Delta_{lu}\hat{Y}_{1}^{-1} \\ &= (\hat{H}\hat{Y}_{0}-\Delta_{h}-\Delta_{s})\hat{U}\hat{Y}_{1}^{-1}-\Omega \Delta_{lu}\hat{Y}_{1}^{-1} \\ &= \hat{H}\hat{Y}_{0}\hat{U}\hat{Y}_{1}^{-1}-\Delta_{h}\hat{U}\hat{Y}_{1}^{-1}-\Delta_{s}\hat{U}\hat{Y}_{1}^{-1}-\Omega \Delta_{lu}\hat{Y}_{1}^{-1} \\ &= \hat{H}(\hat{Y}_{1}-\Delta_{y})\hat{Y}_{1}^{-1}-\Delta_{h}\hat{U}\hat{Y}_{1}^{-1}-\Delta_{s}\hat{U}\hat{Y}_{1}^{-1}-\Omega \Delta_{lu}\hat{Y}_{1}^{-1} \\ &= \hat{H}-\hat{H}\Delta_{y}\hat{Y}_{1}^{-1}-\Delta_{h}\hat{U}\hat{Y}_{1}^{-1}-\Delta_{s}\hat{U}\hat{Y}_{1}^{-1}-\Omega \Delta_{lu}\hat{Y}_{1}^{-1}. 
\end{split}
\end{equation}
With \eqref{eq:zheng}, it is clear that
\begin{equation} \label{eq:bound}
\begin{split}
\norm{\Omega PX\hat{Y}_{1}^{-1}-\hat{H}}_{2} &\le \norm{\hat{H}}_{2}\norm{\Delta_{y}}_{F}\norm{\hat{Y}_{1}^{-1}}_{2}+\norm{\Delta_{h}}_{F}\norm{\hat{U}\hat{Y}_{1}^{-1}}_{2} \\ &+ \norm{\Delta_{s}}_{F}\norm{\hat{U}\hat{Y}_{1}^{-1}}_{2}+\norm{\Omega}_{2}\norm{\Delta_{lu}}_{F}\norm{\hat{Y}_{1}^{-1}}_{2}. 
\end{split}
\end{equation}
For $\Delta_{lu}$ in \eqref{eq:lu}, with Lemma~\ref{lemma lu} and Assumption~\ref{assumption:1}, we can take the bound of $\norm{\Delta_{lu}}_{F}$ as
\begin{equation} \label{eq:elu}
\begin{split}
\norm{\Delta_{lu}}_{F} \le \gamma_{n} \cdot \norm{\hat{L}}_{F}\norm{\hat{U}}_{F} \le 1.02n\uu \cdot \sqrt{n}\norm{\hat{L}}_{2} \cdot \sqrt{n}\norm{\hat{U}}_{2} \le 1.02n^{2}\uu \cdot \norm{\hat{L}}_{2}\norm{\hat{U}}_{2}.
\end{split}
\end{equation}
Therefore, with \eqref{eq:b}, we put \eqref{eq:dsf}, \eqref{eq:dhf}, \eqref{eq:eb}-\eqref{eq:y-12}, \eqref{eq:uy-12} and \eqref{eq:elu} into \eqref{eq:bound}, we can get
\begin{equation} \label{eq:pxy-1}
\begin{split}
\norm{\Omega PX\hat{Y}_{1}^{-1}-\hat{H}}_{2} &\le 1.01 \cdot 1.06n^{2}\uu_{b} \cdot ((1+\epsilon)^{\frac{1}{2}}+t) \cdot \norm{\hat{L}}_{2}\norm{\hat{U}}_{2} \cdot \frac{1.12}{(1-\epsilon)^{\frac{1}{2}}} \cdot \frac{1}{\sigma_{n}(\hat{L})\sigma_{n}(\hat{U})} \\ &+1.02csn\sqrt{n}\uu_{b} \cdot ((1+\epsilon)^{\frac{1}{2}}+t) \cdot \norm{\hat{L}}_{2} \cdot \frac{1.12}{(1-\epsilon)^{\frac{1}{2}}} \cdot \frac{1}{\sigma_{n}(\hat{L})} \\ &+t\norm{\hat{L}}_{2} \cdot \frac{1.12}{(1-\epsilon)^{\frac{1}{2}}} \cdot \frac{1}{\sigma_{n}(\hat{L})}  \\ &+1.02n^{2}\uu \cdot \norm{\hat{L}}_{2}\norm{\hat{U}}_{2} \cdot \norm{\Omega}_{2} \cdot \frac{1.12}{(1-\epsilon)^{\frac{1}{2}}} \cdot \frac{1}{\sigma_{n}(\hat{L})\sigma_{n}(\hat{U})} \\ &\le \frac{1.01 \cdot 1.06 \cdot 1.12+1.02 \cdot 1.12}{23.6}+\frac{1.02 \cdot 1.12}{25.5}+\frac{1.12}{25.5} \\ &\le 0.19.
\end{split}
\end{equation}
Therefore, with Lemma~\ref{lemma Weyl}, \eqref{eq:eb} and \eqref{eq:pxy-1}, 
\begin{equation} \label{eq:eq1}
0.8 \le \sigma_{n}(\hat{H})-0.19 \le \sigma_{n}(\Omega PX\hat{Y}_{1}^{-1}) \le \norm{\Omega PX\hat{Y}_{1}^{-1}}_{2} \le \norm{\hat{H}}_{2}+0.19 \le 1.2.
\end{equation}
For $X\hat{Y}_{1}^{-1}$ in \eqref{eq:ex}, when $P \in \mathbb{R}^{m\times m}$ is a permutation matrix and $\Omega$ embeds $PX\hat{Y}_{1}^{-1}$, with Lemma~\ref{lemma 22} and \eqref{eq:eq1}, 
\begin{equation} \label{eq:eq3}
\frac{4}{5(1+\epsilon)^{\frac{1}{2}}} \le \sigma_{n}(PX\hat{Y}_{1}^{-1})=\sigma_{n}(X\hat{Y}_{1}^{-1}) \le \norm{X\hat{Y}_{1}^{-1}}_{2}=\norm{PX\hat{Y}_{1}^{-1}}_{2} \le \frac{6}{5(1-\epsilon)^{\frac{1}{2}}}. 
\end{equation}
\eqref{eq:pxy-1} and \eqref{eq:eq1} hold with probability at leat $(1-p)^{2}$. Starting from \eqref{eq:eq3}, all the theoretical results hold with probability at least $(1-p)^{2}$.
For $\hat{W}-X\hat{Y}_{1}^{-1}$ in \eqref{eq:ex}, with \eqref{eq:b}, \eqref{eq:y-12} and \eqref{eq:dxf}, $\norm{\hat{W}-X\hat{Y}_{1}^{-1}}_{2}$ satisfies
\begin{equation} \label{eq:w-xy-1}
\begin{split}
\norm{\hat{W}-X\hat{Y}_{1}^{-1}}_{2} &= \norm{\Delta_{x}\hat{Y}_{1}^{-1}} \le \norm{\Delta_{x}}_{F}\norm{\hat{Y}_{1}^{-1}}_{2} \\ &\le 1.08n^{2}\uu \cdot ((1+\epsilon)^{\frac{1}{2}}+t) \cdot \norm{\hat{L}}_{2}\norm{\hat{U}}_{2}\norm{\hat{W}}_{2} \cdot \frac{1.12}{(1-\epsilon)^{\frac{1}{2}}} \cdot \frac{1}{\sigma_{n}(\hat{L})\sigma_{n}(\hat{U})} \\ &\le 0.06\norm{\hat{W}}_{2}.
\end{split}
\end{equation}

Therefore, for $\hat{W}$ in \eqref{eq:ex}, with Lemma~\ref{lemma Weyl} and \eqref{eq:ex}, we can have
\begin{equation}
\sigma_{n}(X\hat{Y}_{1}^{-1})-\norm{\hat{W}-X\hat{Y}_{1}^{-1}}_{2} \le \sigma_{n}(\hat{W}) \le \norm{\hat{W}}_{2} \le \norm{X\hat{Y}_{1}^{-1}}_{2}+\norm{\hat{W}-X\hat{Y}_{1}^{-1}}_{2}. \label{eq:sigma}
\end{equation}
With \eqref{eq:eq3}-\eqref{eq:sigma}, we can get
\begin{equation} \label{eq:w21}
\norm{\hat{W}}_{2} \le \frac{6}{5(1-\epsilon)^{\frac{1}{2}}}+0.06\norm{\hat{W}}_{2}.
\end{equation}
With \eqref{eq:w21}, $\norm{\hat{W}}_{2}$ satisfies
\begin{equation} 
\norm{\hat{W}}_{2} \le \frac{1.28}{(1-\epsilon)^{\frac{1}{2}}}. \nonumber
\end{equation}
\eqref{eq:w2} is proved. With \eqref{eq:eq3}-\eqref{eq:sigma} and \eqref{eq:w2}, we can get a lower bound of $\sigma_{n}(\hat{W})$ as
\begin{equation} 
\begin{split}
\sigma_{n}(\hat{W}) &\ge \sigma_{n}(X\hat{Y}_{1}^{-1})-\norm{\hat{W}-X\hat{Y}_{1}^{-1}}_{2} \\ &\ge \sigma_{n}(X\hat{Y}_{1}^{-1})-0.06 \cdot \norm{\hat{W}}_{2} \\ &\ge \frac{4}{5(1+\epsilon)^{\frac{1}{2}}}-0.06 \cdot \frac{1.28}{(1-\epsilon)^{\frac{1}{2}}} \\ &\ge \frac{4}{5(1+\epsilon)^{\frac{1}{2}}}-\frac{0.08}{(1-\epsilon)^{\frac{1}{2}}}. \nonumber
\end{split}
\end{equation}
\eqref{eq:nw} is proved. Therefore, Theorem~\ref{theorem:RCLUPPr} holds.
\end{proof}

\subsection{Proof of Theorem~\ref{theorem:RCLUPPrd}}
With the previous theoretical results, we prove Theorem~\ref{theorem:RCLUPPrd}. 

\begin{proof}
We divide the proof into two steps to bound the orthogonality $\norm{\hat{Q}^{\top}\hat{Q}-I_{n}}_{F}$ and the residual $\norm{\hat{Q}\hat{R}-X}_{F}$.

\subsubsection{Orthogonality}
For $\norm{\hat{Q}^{\top}\hat{Q}-I_{n}}_{F}$, we focus on $\kappa_{2}(\hat{W})$ first. With \eqref{eq:w2} and \eqref{eq:nw}, $\kappa_{2}(\hat{W})$ has the upper bound of
\begin{equation} \label{eq:k2}
\kappa_{2}(\hat{W}) \le \frac{\norm{\hat{W}}_{2}}{\sigma_{n}(\hat{W})} \le \frac{\frac{1.28}{(1-\epsilon)^{\frac{1}{2}}}}{\frac{4}{5(1+\epsilon)^{\frac{1}{2}}}-\frac{0.08}{(1-\epsilon)^{\frac{1}{2}}}} \le \frac{1.28}{0.8 \cdot (\frac{1-\epsilon}{1+\epsilon})^{\frac{1}{2}}-0.08}.
\end{equation}

For $\norm{\hat{Q}^{\top}\hat{Q}-I_{n}}_{F}$, with Assumption~\ref{assumption:s} and \eqref{eq:k2}, when $\uu=\uu_{b}$, it is clear that
\begin{equation} \label{eq:set}
\frac{1.28}{0.8 \cdot (\frac{1-\epsilon}{1+\epsilon})^{\frac{1}{2}}-0.08} \le \frac{1}{8\sqrt{mn\uu_{b}+n(n+1)\uu_{b}}},
\end{equation}
Therefore, with \cite[Theorem 3.3]{error} and \eqref{eq:set}, $\norm{\hat{Q}^{\top}\hat{Q}-I_{n}}_{F}$ satisfies
\begin{equation}
\norm{\hat{Q}^{\top}\hat{Q}-I_{n}}_{F} \le 6(mn\uu_{b}+n(n+1)\uu_{b}). \nonumber
\end{equation}
\eqref{eq:ortho} is proved.

\subsubsection{Residual}
In the following, we turn to the residual $\norm{\hat{Q}\hat{R}-X}_{F}$. CholeskyQR2 following the preconditioning steps can be explicitly expressed by incorporating the error matrices below.
\begin{align}
\hat{G}_{1} &= \hat{W}^{\top}\hat{W}+\Delta_{1}, \label{eq:C1} \\
\hat{Y}_{2}^{\top}\hat{Y}_{2} &= \hat{G}_{1}+\Delta_{2}, \label{eq:C2} \\
\hat{V}\hat{Y}_{2} &= \hat{W}+\Delta_{3}, \label{eq:C3} \\
\hat{Y}_{3} &= \hat{Y}_{2}\hat{Y}_{1}+\Delta_{4}, \label{eq:C4} \\
\hat{G}_{2} &= \hat{V}^{\top}\hat{V}+\Delta_{5}, \label{eq:C5} \\
\hat{Y}_{4}^{\top}\hat{Y}_{4} &= \hat{G}_{2}+\Delta_{6}, \label{eq:C6} \\
\hat{Q}\hat{Y}_{4} &= \hat{V}+\Delta_{7}, \label{eq:C7} \\
\hat{R} &= \hat{Y}_{4}\hat{Y}_{3}+\Delta_{8}. \label{eq:C8}
\end{align}
With \eqref{eq:C3}, \eqref{eq:C4}, \eqref{eq:C7} and \eqref{eq:C8}, we can have
\begin{equation} \label{eq:qr-x}
\begin{split}
\hat{Q}\hat{R} &= (\hat{V}+\Delta_{7})\hat{Y}_{4}^{-1}(\hat{Y}_{4}\hat{Y}_{3}+\Delta_{8}) \\ &= \hat{V}\hat{Y}_{3}+\Delta_{7}\hat{Y}_{3}+\hat{Q}\Delta_{8} \\ &= (\hat{W}+\Delta_{3})\hat{Y}_{2}^{-1}(\hat{Y}_{2}\hat{Y}_{1}+\Delta_{4})+\Delta_{7}\hat{Y}_{3}+\hat{Q}\Delta_{8} \\ &= \hat{W}\hat{Y}_{1}+\Delta_{3}\hat{Y}_{1}+\hat{V}\Delta_{4}+\Delta_{7}\hat{Y}_{3}+\hat{Q}\Delta_{8}. 
\end{split}
\end{equation}
With \eqref{eq:qr-x}, for the residual, 
\begin{equation} \label{eq:qr-xf}
\begin{split}
\norm{\hat{Q}\hat{R}-X}_{F} &\le \norm{\hat{W}\hat{Y}_{1}-X}_{F}+\norm{\Delta_{3}}_{F}\norm{\hat{Y}_{1}}_{2}+\norm{\hat{V}}_{2}\norm{\Delta_{4}}_{F} \\ &+\norm{\Delta_{7}}_{F}\norm{\hat{Y}_{3}}_{2}+\norm{\hat{Q}}_{2}\norm{\Delta_{8}}_{F}.
\end{split}
\end{equation}
Therefore, we need to estimate $\norm{\hat{W}\hat{Y}_{1}-X}_{F}$, $\norm{\Delta_{3}}_{F}$, $\norm{\hat{Y}_{1}}_{2}$, $\norm{\hat{V}}_{2}$, $\norm{\Delta_{4}}_{F}$, $\norm{\Delta_{7}}_{F}$, $\norm{\hat{Y}_{3}}_{2}$, $\norm{\hat{Q}}_{2}$ and $\norm{\Delta_{8}}_{F}$ to bound $\norm{\hat{Q}\hat{R}-X}_{F}$ according to \eqref{eq:qr-xf}.

According to Assumption~\ref{assumption:s}, we define $j=(\frac{\frac{(1-\epsilon)^{\frac{1}{2}}}{(1+\epsilon)^{\frac{1}{2}}}-0.1}{12.8})^{2}$. With $m>n$ and $\frac{(1-\epsilon)^{\frac{1}{2}}}{(1+\epsilon)^{\frac{1}{2}}}<1$ based on $0 \le \epsilon<1$, when $\uu=\uu_{b}$, we can find that
\begin{equation} \label{eq:rest1}
2n(n+1)\uu_{b} \le mn\uu_{b}+n(n+1)\uu_{b} \le j^{2}=(\frac{\frac{(1-\epsilon)^{\frac{1}{2}}}{(1+\epsilon)^{\frac{1}{2}}}-0.1}{12.8})^{2} \le 0.005.
\end{equation}
With \eqref{eq:rest1}, we can get
\begin{equation} \label{eq:rest}
n\sqrt{n}\uu_{b} \le n^{2}\uu_{b} \le n(n+1)\uu_{b} \le 0.003.
\end{equation}
Here, \eqref{eq:rest} is a stricter condition than \eqref{eq:a}. For $\Delta_{x}=\hat{W}\hat{Y}_{1}-X$ in \eqref{eq:ex}, with Lemma~\ref{lemma Weyl}, \eqref{eq:dxf1}, the properties of the matrix norms and the singular values, we can have
\begin{equation} \label{eq:xf}
\begin{split}
\norm{\hat{W}\hat{Y}_{1}-\Delta_{x}}_{F} \ge \norm{\hat{W}\hat{Y}_{1}}_{F}-\norm{\Delta_{x}}_{F} \ge \sigma_{n}(\hat{W})\norm{\hat{Y}_{1}}_{F}-1.02n\sqrt{n}\uu_{b} \cdot \norm{\hat{W}}_{2}\norm{\hat{Y}_{1}}_{F}. 
\end{split}
\end{equation}
Assumption~\ref{assumption:s} and \eqref{eq:k2} guarantees $\sigma_{n}(\hat{W})\norm{\hat{Y}_{1}}_{F}-1.02n\sqrt{n}\uu_{b} \cdot \norm{\hat{W}}_{2}\norm{\hat{Y}_{1}}_{F}>0$. Therefore, with \eqref{eq:w2}, \eqref{eq:nw}, \eqref{eq:rest} and \eqref{eq:xf}, $\norm{\hat{Y}_{1}}_{F}$ can be bounded as
\begin{equation} \label{eq:yf2}
\begin{split}
\norm{\hat{Y}_{1}}_{F} &\le \frac{\norm{X}_{F}}
{\sigma_{n}(\hat{W})-1.02n\sqrt{n}\uu_{b}\norm{\hat{W}}_{2}} \\
&\le \frac{\norm{X}_{F}}
{\frac{4}{5(1+\epsilon)^{\frac{1}{2}}}-\frac{0.08}{(1-\epsilon)^{\frac{1}{2}}}
-1.02n\sqrt{n}\uu_{b}\cdot\frac{1.28}{(1-\epsilon)^{\frac{1}{2}}}} \\
&\le \frac{\norm{X}_{F}}
{\frac{0.8}{(1+\epsilon)^{\frac{1}{2}}}-\frac{0.084}{(1-\epsilon)^{\frac{1}{2}}}}=
{k\norm{X}_{F}}.
\end{split}
\end{equation}
Here, we define $k=\frac{1}{\frac{0.8}{(1+\epsilon)^{\frac{1}{2}}}-\frac{0.084}{(1-\epsilon)^{\frac{1}{2}}}}$. It is clear that $k > 0$ with Assumption~\ref{assumption:s}. Therefore, when $\uu=\uu_{b}$, with \eqref{eq:w2}, \eqref{eq:dxf1} and \eqref{eq:yf2}, we can take the bound of $\norm{\Delta_{x}}_{F}=\norm{\hat{W}\hat{Y}_{1}-X}_{F}$ as
\begin{equation} \label{eq:wy-xf}
\begin{split}
\norm{\Delta_{xy}}_{F} \le 1.02n\sqrt{n}\uu_{b} \cdot \norm{\hat{W}}_{2}\norm{\hat{Y}_{1}}_{F} \le 1.02n\sqrt{n}\uu_{b} \cdot \frac{1.28k\norm{X}_{F}}{(1-\epsilon)^{\frac{1}{2}}} \le \frac{1.31kn\sqrt{n}\uu_{b} \cdot \norm{X}_{F}}{(1-\epsilon)^{\frac{1}{2}}}.
\end{split}
\end{equation}

For $\hat{Y}_{1}$ in \eqref{eq:ey}, in a similar way to get \eqref{eq:yf2}, $\norm{\hat{Y}_{1}}_{2}$ has the upper bound of
\begin{equation} \label{eq:y2a}
\norm{\hat{Y}_{1}}_{2} \le k\norm{X}_{2}.
\end{equation}
For $\Delta_{3}$ and $\Delta_{4}$ in \eqref{eq:C3} and \eqref{eq:C4}, we need to focus on $\hat{Y}_{2}$ and $\hat{V}$ in \eqref{eq:C2} and \eqref{eq:C3} first with the analysis of $\Delta_{1}$ and $\Delta_{2}$ in \eqref{eq:C1} and \eqref{eq:C2}. According to \cite[(3.7), (3.13)]{error}, with Lemma~\ref{lemma mm}, Assumption~\ref{assumption:1}, \eqref{eq:rest1} and \eqref{eq:rest}, when $\uu=\uu_{b}$, $\norm{\Delta_{1}}_{F}$ and $\norm{\Delta_{2}}_{F}$ satisfies
\begin{align} 
\norm{\Delta_{1}}_{F} &\le \gamma_{m} \cdot n\norm{\hat{W}}_{2}^{2} \le 1.02mn\uu_{b} \cdot \norm{X}_{2}^{2}, \label{eq:d1f} \\
\begin{split} 
\norm{\Delta_{2}}_{F} &\le \frac{\gamma_{n+1} \cdot n(1+\gamma_{m}n)}{1-\gamma_{n+1}n} \cdot \norm{X}_{2}^{2} \\ &\le 1.02n(n+1)\uu_{b} \cdot \frac{1+1.02mn\uu_{b}}{1-1.02n(n+1)\uu_{b}} \cdot \norm{X}_{2}^{2} \\ &\le 1.02n(n+1)\uu_{b} \cdot \frac{1+0.0051}{1-0.0031} \cdot \norm{X}_{2}^{2} \\ &\le 1.03n(n+1)\uu_{b} \cdot \norm{X}_{2}^{2}.
\end{split}
\label{eq:d2f}
\end{align}
For $\hat{Y}_{2}$ in \eqref{eq:C2}, with \eqref{eq:C1}, \eqref{eq:C2}, \eqref{eq:rest1}, \eqref{eq:d1f} and \eqref{eq:d2f},  we can have
\begin{equation} \label{eq:d22}
\begin{split}
\norm{\hat{Y}_{2}}_{2}^{2} &\le \norm{\hat{W}}_{2}^{2}+\norm{\Delta_{1}}_{F}+\norm{\Delta_{2}}_{F} \\ &\le \norm{\hat{W}}_{2}^{2}+1.03(mn\uu_{b}+n(n+1)\uu_{b}) \cdot \norm{\hat{W}}_{2}^{2} \\ &\le (1+1.03j^{2}) \cdot \norm{\hat{W}}_{2}^{2}.
\end{split}
\end{equation}
Therefore, with \eqref{eq:w2}, we can take the upper bound of $\norm{\hat{Y}_{2}}_{2}$ as
\begin{equation} \label{eq:d2}
\norm{\hat{Y}_{2}}_{2} \le \sqrt{1+1.03j^{2}} \cdot \norm{\hat{W}}_{2} \le \sqrt{1+1.03j^{2}} \cdot \frac{1.28}{(1-\epsilon)^{\frac{1}{2}}}. 
\end{equation}
For $\hat{V}$ in \eqref{eq:C3}, with \cite[(3.26)]{error} and \eqref{eq:rest1}, we have
\begin{equation} \label{eq:v2}
\norm{\hat{V}}_{2} \le \frac{\sqrt{69}}{8}.
\end{equation}
Therefore, following the step to get \eqref{eq:dxf1}, with Lemma~\ref{lemma mm}, Lemma~\ref{lemma system}, Assumption~\ref{assumption:1}, \eqref{eq:yf2}, \eqref{eq:d2} and \eqref{eq:v2}, when $\uu=\uu_{b}$, we can bound $\norm{\Delta_{3}}_{F}$ and $\norm{\Delta_{4}}_{F}$ as
\begin{align} 
\begin{split}
\norm{\Delta_{3}}_{F} &\le \gamma_{n} \cdot \norm{\hat{V}}_{F}\norm{\hat{Y}_{2}}_{F} \le 1.02n\uu_{b} \cdot \sqrt{n}\norm{\hat{V}}_{2} \cdot \sqrt{n}\norm{\hat{Y}_{2}}_{2} \\ &\le 1.02n\uu_{b} \cdot \frac{\sqrt{69n}}{8} \cdot \sqrt{1+1.03j^{2}} \cdot \frac{1.28\sqrt{n}}{(1-\epsilon)^{\frac{1}{2}}} \\ &\le \frac{1.36}{(1-\epsilon)^{\frac{1}{2}}} \cdot \sqrt{1+1.03j^{2}} \cdot n^{2}\uu_{b}, \\  
\end{split}
\label{eq:d3f} \\ 
\begin{split}
\norm{\Delta_{4}}_{F} &\le \gamma_{n} \cdot \norm{\hat{Y}_{2}}_{F}\norm{\hat{Y}_{1}}_{F} \le 1.02n\uu _{b} \cdot \sqrt{n}\norm{\hat{Y}_{2}}_{2} \cdot \norm{\hat{Y}_{1}}_{F} \\ &\le 1.02n\uu_{b} \cdot \sqrt{1+1.03j^{2}} \cdot \frac{1.28\sqrt{n}}{(1-\epsilon)^{\frac{1}{2}}} \cdot k\norm{X}_{F} \\ &\le \frac{1.31}{(1-\epsilon)^{\frac{1}{2}}} \cdot \sqrt{1+1.03j^{2}} \cdot kn\sqrt{n}\uu_{b} \cdot \norm{X}_{F}.
\end{split}
\label{eq:d4f} 
\end{align}

For $\Delta_{7}$ and $\Delta_{8}$ in \eqref{eq:C7} and \eqref{eq:C8}, we need to focus on $\hat{Y}_{3}$, $\hat{Y}_{4}$ and $\hat{Q}$ in \eqref{eq:C4}, \eqref{eq:C6} and \eqref{eq:C7} first. For $\hat{Y}_{3}$ in \eqref{eq:C4}, with \eqref{eq:rest}, \eqref{eq:yf2}, \eqref{eq:y2a}, \eqref{eq:d2} and \eqref{eq:d4f}, since $\frac{\norm{X}_{F}}{\norm{X}_{2}} \le \sqrt{n}$, $\norm{\hat{Y}_{3}}_{l}$ satisfies
\begin{align} 
\begin{split}
\norm{\hat{Y}_{3}}_{l} &\le \norm{\hat{Y}_{2}}_{2}\norm{\hat{Y}_{1}}_{l}+\norm{\Delta_{4}}_{F} \\ &\le \sqrt{1+1.03j^{2}} \cdot \frac{1.28}{(1-\epsilon)^{\frac{1}{2}}} \cdot k\norm{X}_{l} +\frac{1.31}{(1-\epsilon)^{\frac{1}{2}}} \cdot \sqrt{1+1.03j^{2}} \cdot kn\sqrt{n}\uu_{b} \cdot \norm{X}_{F} \\ &\le \sqrt{1+1.03j^{2}} \cdot \frac{1.29}{(1-\epsilon)^{\frac{1}{2}}} \cdot k\norm{X}_{l},
\end{split}
\label{eq:n2},
\end{align}
with $l=\{2,F\}$. For $\hat{Y}_{4}$ in \eqref{eq:C6}, similar to the steps to get \eqref{eq:d2} and with \eqref{eq:v2}, $\norm{\hat{Y}_{4}}_{2}$ satisfies
\begin{equation} \label{eq:j22}
\norm{\hat{Y}_{4}}_{2} \le \sqrt{1+1.03j^{2}} \cdot \norm{\hat{V}}_{2} \le \sqrt{1+1.03j^{2}} \cdot \frac{\sqrt{69}}{8}.
\end{equation}
For $\hat{Q}$ in \eqref{eq:C7}, with \eqref{eq:ortho} and \eqref{eq:rest1}, 
\begin{equation} \label{eq:q2}
\norm{\hat{Q}}_{2} \le \sqrt{\norm{I_{n}}_{2}+6(mn\uu_{b}+n(n+1)\uu_{b})} \le \sqrt{1+6j^{2}}.
\end{equation}
Therefore, following the steps to get \eqref{eq:dxf1}, \eqref{eq:d3f} and \eqref{eq:d4f}, with Lemma~\ref{lemma mm}, Lemma~\ref{lemma system}, Assumption~\ref{assumption:1} and \eqref{eq:n2}-\eqref{eq:q2}, when $\uu=\uu_{b}$, $\norm{\Delta_{7}}_{F}$ and $\norm{\Delta_{8}}_{F}$ have the upper bounds of
\begin{align} 
\begin{split} 
\norm{\Delta_{7}}_{F} &\le \gamma_{n} \cdot \norm{\hat{Q}}_{F}\norm{\hat{Y}_{4}}_{F} \le 1.02n\uu_{b} \cdot \sqrt{n}\norm{\hat{Q}}_{2} \cdot \sqrt{n}\norm{\hat{Y}_{4}}_{2} \\ &\le 1.02n\uu_{b} \cdot \sqrt{1+6j^{2}} \cdot \sqrt{n} \cdot \sqrt{1+1.03j^{2}} \cdot \frac{\sqrt{69}}{8} \cdot \sqrt{n} \\ &\le 1.06n^{2}\uu_{b} \cdot \sqrt{1+6j^{2}} \cdot \sqrt{1+1.03j^{2}}, 
\end{split}
\label{eq:d7f} \\
\begin{split} 
\norm{\Delta_{8}}_{F} &\le \gamma_{n} \cdot \norm{\hat{Y}_{4}}_{F}\norm{\hat{Y}_{3}}_{F} \le 1.02n\uu_{b} \cdot \sqrt{n}\norm{\hat{Y}_{4}}_{2} \cdot \norm{\hat{Y}_{3}}_{F} \\ &\le 1.02n\uu_{b} \cdot \sqrt{1+1.03j^{2}} \cdot \frac{\sqrt{69}}{8} \cdot \sqrt{n} \cdot \sqrt{1+1.03j^{2}} \cdot \frac{1.29}{(1-\epsilon)^{\frac{1}{2}}} \cdot k\norm{X}_{F} \\ &\le \frac{1.37}{(1-\epsilon)^{\frac{1}{2}}} \cdot (1+1.03j^{2}) \cdot kn\sqrt{n}\uu_{b} \cdot \norm{X}_{F}. 
\end{split}
\label{eq:d8f}
\end{align}
Therefore, \eqref{eq:res} is received when we put \eqref{eq:wy-xf}, \eqref{eq:y2a}, \eqref{eq:v2}-\eqref{eq:n2} and \eqref{eq:q2}-\eqref{eq:d8f} into \eqref{eq:qr-xf}. In all, Theorem~\ref{theorem:RCLUPPrd} holds.
\end{proof}

\section{Discussions}
\label{sec:comparisons}
In this section, some discussions of RCLUPPr are provided. We focus on the applicability of RCLUPPr first in the uniform precision. We take $\uu_{b}=\uu$ in Assumption~\ref{assumption:1}. Therefore, $\uu_{c}=\uu$ holds in \eqref{eq:b}. Since $\norm{\Delta_{lu}}_{F}$ in \eqref{eq:lu} is very small in most real cases compared with the theoretical bound in \eqref{eq:elu}, with \eqref{eq:b} and \eqref{eq:elu}, we can have
\begin{equation}
\kappa_{2}(X) = \kappa_{2}(PX) \le a\kappa_{2}(\hat{L})\kappa_{2}(\hat{U}) \le \frac{a(1-\epsilon)^{\frac{1}{2}}}{23.6dn^{2}\uu \cdot ((1+\epsilon)^{\frac{1}{2}}+t)}. \label{eq:lu1}
\end{equation}
Here, $a$ is a positive constant slightly greater than $1$. Based on \cite[(4.9)]{RCLUPP} and with \eqref{eq:lu1}, the comparison of $\kappa_{2}(X)$ between RCLUPPr and the existing RCLUPP are shown below as
\begin{equation} \label{eq:comparison1}
\kappa_{2}(X)=
\begin{cases}
\displaystyle
a_{1} \cdot \frac{(1+\epsilon)^{\frac{1}{2}}}{(1-\epsilon)^{\frac{1}{2}}}
\cdot \frac{1}{21.4p_{2}}
& \mbox{RCLUPP} \\
\displaystyle
\frac{a(1-\epsilon)^{\frac{1}{2}}}
{23.6dn^{2}\uu \cdot \left((1+\epsilon)^{\frac{1}{2}}+t\right)}
& \mbox{RCLUPPr}
\end{cases},
\end{equation}
with $p_{2} = \max(n^{2}\uu, \frac{k}{(1-\epsilon)^{\frac{1}{2}}-k})$ and $k=\frac{\norm{E_{s}}_{F}}{\norm{X}_{2}}$. $E_{s}$ in \eqref{eq:comparison1} satisfying $\Omega X=\hat{A}+E_{s}$ is the rounding error of matrix sketching in Algorithm~\ref{alg:RCLUPP}. We replace $k_{1}$ in \cite[(15)]{RCLUPP} with $\sqrt{n}$ for comparison. $a_{1}$ is a positive constant. Moreover, the requirement of $\kappa_{2}(\hat{L}))$ between RCLUPPr and LUC2 is shown as
\begin{equation} \label{eq:cl}
\kappa_{2}(\hat{L}) =
\left\{
\begin{array}{ll}
\displaystyle \frac{1}{8\sqrt{mn\uu+n(n+1)\uu}}, & \mbox{LUC2}, \\
\displaystyle \frac{(1-\epsilon)^{\frac{1}{2}}}
{25.5t+25.5csn\sqrt{n}\uu \cdot ((1+\epsilon)^{\frac{1}{2}}+t)}, 
& \mbox{RCLUPPr}.
\end{array}
\right.
\end{equation}

As shown in \eqref{eq:comparison1}, the applicability of RCLUPP is primarily limited by the rounding error in matrix sketching, reflected by $p_{2}$ in its sufficient condition on $\kappa_{2}(X)$. In contrast, \eqref{eq:a} and \eqref{eq:lu1} show that matrix sketching has a negligible effect on RCLUPPr. The key difference is that $\Delta_s$ in \eqref{eq:es} perturbs only the $L$-factor in RCLUPPr, rather than the input matrix $X$. Moreover, comparing \cite[(4.25)]{RCLUPP} with \eqref{eq:pxy-1} shows that RCLUPPr requires a much weaker bound on $\kappa_{2}(\hat{L})\kappa_{2}(\hat{U})$ to control $\kappa_{2}(\hat{W})$. This relaxation is a major theoretical advantage over RCLUPP and RCholeskyQR, which is supported by the numerical results in \cite[Section~6.1.3]{RCLUPP}. Finally, \eqref{eq:cl} highlights the advantage of RCLUPPr over LUC2 by avoiding the potential numerical breakdown of LUC2 when the $L$-factor obtained from LUPP is highly ill-conditioned.

Following the comparison of applicability, we focus on the computational complexity of the preconditioning step before CholeskyQR2 for RCLUPP and RCLUPPr. Similar to \eqref{eq:lu1}, we show the computational complexity of RCLUPP and RCLUPPr for $X \in \mathbb{R}^{m\times n}$ as
\begin{equation} \label{eq:cc}
\mbox{Computational complexity}=
\begin{cases}
\mathcal{O}(smn+mn^{2}) & \mbox{RCLUPP} \\
\mathcal{O}(smn+mn^{2}) & \mbox{RCLUPPr}
\end{cases}.
\end{equation}

Although \eqref{eq:cc} shows that RCLUPP and RCLUPPr have the same asymptotic computational cost, RCLUPPr is less efficient in practice because its dominant operation is LUPP factorization of $X \in \mathbb{R}^{m\times n}$ performed without preconditioning. This is confirmed by the timing results in \cite[Section~6.3]{RCLUPP}. Section~\ref{sec:Numerical} presents several optimization strategies to improve the practical efficiency of RCLUPPr while retaining its advantages in applicability and accuracy.

\section{Experimental results}
\label{sec:Numerical}
For the experimental results, we evaluate the applicability, accuracy, stability, robustness and the efficiency of RCLUPPr through numerical experiments, including several strategies for acceleration. All the numerical experiments are conducted in MATLAB R2025a with the test matrices from the real-world industrial applications and some standard benchmark suites. To assess the applicability and the numerical stability of RCLUPPr, we consider the cases in the single, double and the mixed precision. In the case of the mixed precision, Assumption~\ref{assumption:1} is used with $u_b=2^{-53}$ for the double precision and $u=2^{-24}$ for the single precision. Experiments of the robustness and the efficiency are performed in the double precision. For the randomized algorithms, all the results are averaged over multiple independent runs.

\subsection{Tests of the applicability and numerical stability of RCLUPPr in different precisions}
In this part, we test the applicability and numerical stability of RCLUPPr in different precisions. The comparisons are taken between RCLUPPr, LUC2, RCholeskyQR and RCLUPP. For the numerical experiments, we set $\epsilon=0.5$ and $p=0.6$ as the parameters of matrix sketching in Definition~\ref{definition 2}. We evaluate the numerical stability of the algorithms by measuring both the orthogonality $\norm{\hat{Q}^{\top}\hat{Q}-I_{n}}_{F}$ and the residual $\norm{\hat{Q}\hat{R}-X}_{F}$ across varying $\kappa_{2}(X)$. The influence of $m$, $n$ or other factors can be done in a similar way as that in \cite[Section 6.2]{RCLUPP}.

\subsubsection{Single precision}
In this part, we focus on the case of the single precision with $\uu_{b}=\uu=2^{-24}$ in Assumption~\ref{assumption:1}. We take a numerical example from the problem of distributed optimal control:
\begin{equation}
\begin{aligned}
\min_{\begin{aligned}&u\in L^2(\Omega)\\&a<u<b\end{aligned}}&J(y,u)=\frac{1}{2}\|y-y_d\|^2+\frac{\lambda}{2}\|u\|^2, \\ 
&\text{s.t.}\quad \left\{\begin{aligned}
-\Delta y&=f+u, \quad y \in \Omega \\ \nonumber
y&=0, \quad y \in \partial\Omega
\end{aligned}
\right.
\end{aligned}.
\end{equation}
Here, $f$ and $y_d$ are known data. $\lambda$ is the penalty parameter to guarantee the solvability of this problem. The function $u$ is the control variable, which is subject to a box constraint. To solve the problem of optimal control \cite{1}, the primal-dual active set method, also known as a semi-smooth Newton method, will be applied. It introduces an iteration \cite{3} that solves the matrix $X \in \mathbb{R}^{3n \times 3n}$ repeatedly, which is in the form of
$$
A=\begin{bmatrix}
0 & K & -B \\ 
K & -M & 0 \\ 
EB^T & 0 & I
\end{bmatrix}.
$$
Here, $K \in \mathbb{R}^{n\times n}$ is the stiff matrix. $M \in \mathbb{R}^{n\times n}$ is the mass matrix. $B \in \mathbb{R}^{n\times n}$ is the stiff matrix of the boundary elements. $E \in \mathbb{R}^{n\times n}$ changes with a positive constant $\lambda$ and the active set. When $\lambda$ gets close to $0$, $A$ will become singular. In this group of numerical experiments, we take $n=130$. We construct the testing matrix $X \in \mathbb{R}^{3900\times 390}$ by stacking $10$ $A$ from the top to the bottom, which can guarantee $\kappa_{2}(X) \approx \kappa_{2}(A)$. We fix $s=780$ and vary $\lambda$ from $10^{-3}$, $5\times 10^{-3}$, $10^{-2}$, $5\times 10^{-2}$ to $10^{-1}$. Numerical examples of this group of experiments are shown in Table~\ref{tab:s1} and Table~\ref{tab:s2}.

\begin{table}
\caption{Comparison of Orthogonality in the single precision with different $\kappa_{2}(X)$}
\centering
\begin{tabular}{||c c c c c c||}
\hline
$\kappa_{2}(X)$ & $9.77 \cdot 10^{4}$ & $8.90 \cdot 10^{3}$ & $3.18 \cdot 10^{3}$ & $2.8991 \cdot 10^{2}$ & $1.0287 \cdot 10^{2}$ \\
\hline
LUC2 & $4.75e-6$ & $4.85e-6$ & $4.72e-6$ & $4.86e-6$ & $4.32e-6$ \\
\hline
RHC & $1.02e-5$ & $1.03e-5$ & $1.01e-5$ & $1.02e-5$ & $1.02e-5$ \\
\hline
RCLUPP & $3.40e-6$ & $3.33e-6$ & $3.51e-6$ & $3.48e-6$ & $3.44e-6$ \\
\hline
RCLUPPr & $3.39e-6$ & $3.58e-6$ & $3.55e-6$ & $3.56e-6$ & $3.44e-6$ \\
\hline
\end{tabular}
\label{tab:s1}
\end{table}

\begin{table}
\caption{Comparison of residual in the single precision with different $\kappa_{2}(X)$}
\centering
\begin{tabular}{||c c c c c c||}
\hline
$\kappa_{2}(X)$ & $9.77 \cdot 10^{4}$ & $8.90 \cdot 10^{3}$ & $3.18 \cdot 10^{3}$ & $2.8991 \cdot 10^{2}$ & $1.0287 \cdot 10^{2}$ \\
\hline
LUC2 & $5.30e-3$ & $1.10e-3$ & $5.37e-4$ & $1.05e-4$ & $5.22e-5$ \\
\hline
RHC & $6.00e-3$ & $1.30e-3$ & $6.26e-4$ & $1.19e-4$ & $6.43e-5$ \\
\hline
RCLUPP & $6.20e-3$ & $1.30e-3$ & $6.50e-4$ & $1.19e-4$ & $6.47e-5$ \\
\hline
RCLUPPr & $6.00e-3$ & $1.30e-3$ & $6.39e-4$ & $1.17e-4$ & $6.50e-5$ \\
\hline
\end{tabular}
\label{tab:s2}
\end{table}

Table~\ref{tab:s1} and Table~\ref{tab:s2} show that RCLUPPr achieves numerical stability comparable to that of LUC2 and RCLUPP in both orthogonality and residual in the single precision. Consistent with the findings in \cite[Section 6]{RCLUPP}, RCLUPPr also exhibits superior orthogonality compared with that of RCholeskyQR across different precisions.

\subsubsection{Double precision}
Here, we consider the case of double precision with $\uu_{b}=\uu=2^{-53}$ in Assumption~\ref{assumption:1}. We take a numerical example of the Poisson-type electric-potential equation arising from a one-dimensional capacitively coupled plasma (1D-CCP) fluid simulation \cite{5, 6}. In this example, the ill-conditioned matrix is extracted from the implicit finite-volume discretization of the electric-potential equation along the inter-electrode direction of a 1D-CCP discharge.
The electric-potential equation can be written in the following general form:
\begin{equation}
-\frac{d}{dx}\left(\epsilon_{\mathrm{eff}}\frac{d\phi}{dx}\right)=\rho_{\mathrm{eff}}, \nonumber
\end{equation}
where $\phi$ is the electric potential, and $\epsilon_{\mathrm{eff}}$ and $\rho_{\mathrm{eff}}$ denote the effective permittivity and the effective charge/source term introduced by the implicit coupling between Poisson's equation and the charged-species transport equations. In the plasma region, $\epsilon_{\mathrm{eff}}$ includes the contribution from the implicit electron drift term. $\rho_{\mathrm{eff}}$ contains the ion, electron, and diffusion-related source terms. After the face-based finite-volume discretization, the equation for an interior control volume $P$ can be written as
\begin{equation}
a_P\phi_P+\sum_{N\in\mathcal{N}(P)}a_{PN}\phi_N=b_P, \nonumber
\end{equation}
with
\begin{equation}
a_{PN}=-\frac{S_f}{d_{PN}}\bar{\epsilon}_f, \qquad
\bar{\epsilon}_f=\frac{\epsilon_{\mathrm{eff},P}+\epsilon_{\mathrm{eff},N}}{2}, \qquad
a_P=-\sum_{N\in\mathcal{N}(P)}a_{PN}. \nonumber
\end{equation}
Here, $\mathcal{N}(P)$ denotes the neighboring control volumes of $P$, $S_f$ is the face area, $d_{PN}$ is the distance between the centroids of $P$ and its neighboring control volume $N$, and $\bar{\epsilon}_f$ is the face-interpolated effective permittivity. Therefore, the resulting linear system is
\begin{equation}
A_{1}\phi=b. \nonumber
\end{equation}
In the real 1D-CCP case, we have $A_{1} \in \mathbb{R}^{130\times 130}$ with $\kappa_2(A_{1})=1.21 \times 10^{17}$. Such an $A_{1}$ provides a challenging test case for QR-based linear solvers. To generate the tall skinny $X$ with different $\kappa_{2}(X)$ while preserving the structure of the plasma-discretization matrix, we consider
\begin{equation}
 X_{1}=A_{1}+l_{1}I_{n}. \nonumber
\end{equation}
We vary $l_{1}$ from $10^{-12}$, $10^{-13}$, $10^{-14}$, $10^{-15}$ to $10^{-16}$ to modify $\kappa_2(X_{1})$.
Similar to case of the single precision, we construct the testing matrix $X \in \mathbb{R}^{1300\times 130}$ by stacking $10$ copies of $X_{1}$ vertically with $s=2n=260$. The numerical results for this suite of experiments are summarized in Table~\ref{tab:d1} and Table~\ref{tab:d2}.

\begin{table}
\caption{Comparison of orthogonality in the double precision with different $\kappa_{2}(X)$}
\centering
\begin{tabular}{||c c c c c c||}
\hline
$\kappa_{2}(X)$ & $1.07 \cdot 10^{12}$ & $3.13 \cdot 10^{13}$ & $2.79 \cdot 10^{16}$ & $5.12 \cdot 10^{16}$ & $1.27 \cdot 10^{17}$ \\
\hline
LUC2 & $2.78e-15$ & $2.98e-15$ & $2.99e-15$ & $2.68e-15$ & $3.00e-15$ \\
\hline
RHC & $8.23e-15$ & $8.53e-15$ & $8.79e-15$ & $8.79e-15$ & $8.56e-15$ \\
\hline
RCLUPP & $2.76e-15$ & $2.50e-15$ & $2.45e-15$ & $2.51e-15$ & $2.49e-15$ \\
\hline
RCLUPPr & $2.79e-15$ & $2.74e-15$ & $2.59e-15$ & $2.34e-15$ & $2.88e-15$ \\
\hline
\end{tabular}
\label{tab:d1}
\end{table}

\begin{table}
\caption{Comparison of residual in the double precision with different $\kappa_{2}(X)$}
\centering
\begin{tabular}{||c c c c c c||}
\hline
$\kappa_{2}(X)$ & $1.07 \cdot 10^{12}$ & $3.13 \cdot 10^{13}$ & $2.79 \cdot 10^{16}$ & $5.12 \cdot 10^{16}$ & $1.27 \cdot 10^{17}$ \\
\hline
LUC2 & $4.82e-27$ & $9.93e-16$ & $1.41e-28$ & $9.93e-16$ & $9.93e-16$ \\
\hline
RHC & $2.54e-15$ & $2.22e-15$ & $3.03e-15$ & $2.77e-15$ & $2.69e-15$ \\
\hline
RCLUPP & $2.67e-15$ & $2.93e-15$ & $2.86e-15$ & $2.70e-15$ & $2.65e-15$ \\
\hline
RCLUPPr & $2.31e-15$ & $2.39e-15$ & $2.85e-15$ & $2.74e-15$ & $2.56e-15$ \\
\hline
\end{tabular}
\label{tab:d2}
\end{table}

Table~\ref{tab:d1} and Table~\ref{tab:d2} show that RCLUPPr maintains excellent accuracy and numerical stability for the highly ill-conditioned $X$ in the double precision. According to Table~\ref{tab:s1} and Table~\ref{tab:d2}, we conclude that RCLUPPr is numerically stable in the uniform precision.

\subsubsection{Mixed precision}
\label{sec:mix}
We next examine the numerical stability of RCLUPPr under the setting of the mixed precision as 'single+double' with $\uu=2^{-24}$ and $\uu_{b}=2^{-53}$ in Assumption~\ref{assumption:1}. Following the same strategy as for RCLUPPr, we assign the double precision to all the steps except LUPP factorization up to the computation of the $Q$-factor, $W$, for LUC2, RCholeskyQR and RCLUPP, while performing the remaining steps in the single precision. For RCLUPPr, we focus on two different cases, sRCLUPPr following Assumption~\ref{assumption:1} with LUPP factorization in the single precision and dRCLUPPr with LUPP factorization in the double precision. Two numerical examples are used for the numerical stability and applicability of the algorithms, including the spiked-diagonal matrix and the ill-conditioned triangular matrix. 

The spiked-diagonal matrix arises frequently in control theory and eigenvalue computations, see \cite{Constructing} for further details. Owing to its potentially severe ill-conditioning, it provides a suitable test problem for assessing the robustness of the proposed algorithms. We set $m=20000$, $n=50$ and $s=2n=100$. We let
$v_{1}=(0,1,\ldots,1)^{\top}\in\mathbb{R}^{50},
v_{2}=(1,0,\ldots,0)^{\top}\in\mathbb{R}^{20000}$
and define
$E=\operatorname{diag}\left(1,g^{1/49},\ldots,g^{48/49},g\right) \in \mathbb{R}^{50\times50}$, where $g>0$ is a tunable parameter. We construct
\begin{equation}
X_{1}=
\begin{pmatrix}
E\\
\mathbb{O}
\end{pmatrix}
\in\mathbb{R}^{20000\times50},
\end{equation}
where $\mathbb{O}$ denotes a zero matrix. The test matrix is then defined by
\begin{equation}
X=-5v_{2}v_{1}^{\top}+X_{1}
\in\mathbb{R}^{20000\times50}.
\end{equation}
By choosing $g\in\{10^{-10},10^{-15},10^{-20},10^{-25},10^{-30}\}$, we obtain matrices with different values of $\kappa_{2}(X)$. The corresponding numerical outcomes are shown in Table~\ref{tab:mixa1} and Table~\ref{tab:mixa2}.

We further consider a class of triangular matrices $X \in \mathbb{R}^{m\times n}$ for which the $L$ factor in the LUPP factorization is highly ill-conditioned. Specifically, we construct
\begin{equation}
X=
\begin{bmatrix}
X_{u}+s_{u}I_{n}\\
X_{l}
\end{bmatrix},
\end{equation}
where $X_{u}\in\mathbb{R}^{n\times n}$ is a lower-triangular matrix with unit diagonal entries and $-1$ in all strictly lower-triangular positions. $X_{l}\in\mathbb{R}^{(m-n)\times n}$ is a zero matrix. Such matrices arise in eigenvalue problems, control theory and circuit simulation, see \cite{Elements} and its references for more details. In the experiments, we set $m=4000$, $n=100$ and $s=2n=200$. By varying
$s_{u} \in \{10^{-15},10^{-14},10^{-13},10^{-12},10^{-11}\}$, 
we obtain matrices with different condition numbers $\kappa_{2}(X)$. The numerical results are reported in Table~\ref{tab:mixt1} and Table~\ref{tab:mixt2}.

\begin{table}
\caption{Comparison of orthogonality in the mixed precision for different $\kappa_{2}(X)$ with the spiked-diagonal matrix}
\centering
\begin{tabular}{||c c c c c c||}
\hline
$\kappa_{2}(X)$ & $2.87 \cdot 10^{7}$ & $2.26 \cdot 10^{12}$ & $2.04 \cdot 10^{17}$ & $1.93 \cdot 10^{22}$ & $1.87 \cdot 10^{27}$ \\
\hline
LUC2 & $0$ & $0$ & $0$ & $0$ & $0$ \\
\hline
RHC & $2.17e-6$ & $2.38e-6$ & $3.91e-6$ & -- & -- \\
\hline
RCLUPP & $9.02e-14$ & $1.07e-13$ & $1.04e-13$ & -- & -- \\
\hline
sRCLUPPr & $8.66e-14$ & $8.95e-14$ & $9.17e-14$ & $3.10e-10$ & $8.79e-7$ \\
\hline
dRCLUPPr & $8.27e-14$ & $8.25e-14$ & $7.81e-14$ & $7.34e-14$ & $8.35e-14$ \\
\hline
\end{tabular}
\label{tab:mixa1}
\end{table}

\begin{table}
\caption{Comparison of residual in the mixed precision for different $\kappa_{2}(X)$ with the spiked-diagonal matrix}
\centering
\begin{tabular}{||c c c c c c||}
\hline
$\kappa_{2}(X)$ & $2.87 \cdot 10^{7}$ & $2.26 \cdot 10^{12}$ & $2.04 \cdot 10^{17}$ & $1.93 \cdot 10^{22}$ & $1.87 \cdot 10^{27}$ \\
\hline
LUC2 & $6.62e-8$ & $1.53e-8$ & $4.67e-8$ & $6.72e-9$ & $7.55e-9$ \\
\hline
RHC & $5.18e-6$ & $3.72e-6$ & $3.23e-6$ & -- & -- \\
\hline
RCLUPP & $4.27e-6$ & $4.21e-6$ & $3.96e-6$ & -- & -- \\
\hline
sRCLUPPr & $3.34e-6$ & $1.72e-6$ & $1.72e-6$ & $9.54e-7$ & $1.17e-6$ \\
\hline
dRCLUPPr & $4.62e-6$ & $2.66e-6$ & $1.78e-6$ & $1.50e-6$ & $8.26e-7$ \\
\hline
\end{tabular}
\label{tab:mixa2}
\end{table}

\begin{table}
\caption{Comparison of orthogonality in the mixed precision for different $\kappa_{2}(X)$ with the triangular matrix}
\centering
\begin{tabular}{||c c c c c c||}
\hline
$\kappa_{2}(X)$ & $8.17 \cdot 10^{17}$ & $1.77 \cdot 10^{17}$ & $1.71 \cdot 10^{18}$ & $1.09 \cdot 10^{19}$ & $3.29 \cdot 10^{20}$ \\
\hline
LUC2 & $-$ & $-$ & $-$ & $-$ & $-$ \\
\hline
RHC & $3.15e-4$ & $1.00e-3$ & $3.12e-4$ & $5.45e-4$ & $8.11e-4$ \\
\hline
RCLUPP & $4.39e-6$ & $4.43e-6$ & $4.48e-6$ & $4.58e-6$ & $4.70e-6$ \\
\hline
sRCLUPPr & $4.21e-6$ & $4.14e-6$ & $4.42e-6$ & $4.55e-6$ & $4.25e-6$ \\
\hline
dRCLUPPr & $4.25e-6$ & $4.28e-6$ & $4.52e-6$ & $4.52e-6$ & $4.32e-6$ \\
\hline
\end{tabular}
\label{tab:mixt1}
\end{table}

\begin{table}
\caption{Comparison of residual in the mixed precision for different $\kappa_{2}(X)$ with the triangular matrix}
\centering
\begin{tabular}{||c c c c c c||}
\hline
$\kappa_{2}(X)$ & $8.17 \cdot 10^{17}$ & $1.77 \cdot 10^{17}$ & $1.71 \cdot 10^{18}$ & $1.09 \cdot 10^{19}$ & $3.29 \cdot 10^{20}$ \\
\hline
LUC2 & $-$ & $-$ & $-$ & $-$ & $-$ \\
\hline
RHC & $1.27e-5$ & $1.42e-5$ & $1.40e-5$ & $1.31e-5$ & $1.20e-5$ \\
\hline
RCLUPP & $1.36e-5$ & $1.52e-5$ & $1.84e-5$ & $1.56e-5$ & $1.46e-5$ \\
\hline
sRCLUPPr & $1.51e-5$ & $1.56e-5$ & $1.55e-5$ & $1.63e-5$ & $1.52e-5$ \\
\hline
dRCLUPPr & $1.42e-5$ & $1.45e-5$ & $1.61e-5$ & $1.60e-5$ & $1.57e-5$ \\
\hline
\end{tabular}
\label{tab:mixt2}
\end{table}

Table~\ref{tab:mixa1}-Table~\ref{tab:mixt2} show that RCLUPPr achieves accuracy and numerical stability comparable to those of LUC2 and RCLUPP in the mixed-precision, consistent with the observations in the uniform precision. The advantage of the orthogonality for RCLUPPr over RCholeskyQR is also preserved. For the highly ill-conditioned matrices, Table~\ref{tab:mixa1} and Table~\ref{tab:mixa2} confirm that RCLUPPr retains superior applicability compared with RCholeskyQR. This shows the strong preconditioning effect of performing LUPP factorization before matrix sketching is clearly evidenced by the excellent performance of both LUC2 and RCLUPPr. Table~\ref{tab:mixt1} and Table~\ref{tab:mixt2} demonstrate the superior performance of RCLUPPr over LUC2 when the $L$-factor obtained from LUPP factorization is ill-conditioned, corresponding to the comparison of the theoretical results in \eqref{eq:cl}. Regarding sRCLUPPr with LUPP factorization in the single precision and dRCLUPPr with LUPP factorization in the double-precision, the former frequently encounters numerical breakdown on the arrowhead matrix in the highly ill-conditioned cases. Although slightly slower, dRCLUPPr is considerably more stable and robust owing to the use of LUPP factorization in the double precision LU preconditioning. The single precision offers significantly lower error tolerance than the double precision. Consequently, the rounding error introduced by LUPP is greatly amplified as shown in \eqref{eq:bound}-\eqref{eq:pxy-1}, often leading to numerical breakdown in CholeskyQR2 afterwards. In contrast, the double precision provides a sufficient safety margin, enabling dRCLUPPr to maintain markedly superior robustness in the real cases. In practice, dRCLUPPr can be viewed as a safer and more reliable choice for the real-world experiments. 

\subsection{Tests of robustness of RCLUPPr}
To thoroughly assess the robustness of RCLUPPr, we conduct two groups of experiments in the double precision: the well-conditioned and ill-conditioned cases. As far as we know, this is the first in the existing works of CholeskyQR. For the well-conditioned case, we employ the SVD-based matrix with $m=2000$, $n=50$, and $\norm{X}_{2}=1$. We vary $\kappa_2(X)$ by changing $\sigma_n$ from $10^{-2}$, $10^{-4}$, $10^{-6}$, $10^{-8}$ to $10^{-10}$. We fix $\epsilon=0.5$, $p=0.6$ and $s=2n=100$. We record the times of success and the rate of success for RCLUPPr in $1000$ times, which are shown in Table~\ref{tab:robustness1}. When $\sigma=10^{-10}$, we vary $s$ from $50$, $100$, $200$, $500$ to $1000$ and exhibit the corresponding results in Table~\ref{tab:robustness2}. For the ill-conditioned $X$, we focus on the arrowhead matrices with $m=20000$ and $n=50$. $\kappa_{2}(X)$ is controlled by varying the parameter $g \in\{10^{-15},10^{-20},10^{-25},10^{-30},10^{-35}\}$. We vary $s$ in the same manner as that in the well-conditioned cases. Numerical experiments are summarized in Table~\ref{tab:robustness3} and Table~\ref{tab:robustness4}.

\begin{table}
\caption{Robustness of RCLUPPr in the well-conditioned cases with different $\kappa_{2}(X)$}
\centering
\begin{tabular}{||c c c c c c||}
\hline
$\kappa_{2}(X)$ & $10^{2}$ & $10^{4}$ & $10^{6}$ & $10^{8}$ & $10^{10}$\\
\hline
Times of success & $1000$ & $1000$ & $1000$ & $1000$ & $1000$ \\
\hline
Rates of success & $100\%$ & $100\%$ & $100\%$ & $100\%$ & $100\%$ \\
\hline
\end{tabular}
\label{tab:robustness1}
\end{table}

\begin{table}
\caption{Robustness of RCLUPPr in the well-conditioned cases with different $s$}
\centering
\begin{tabular}{||c c c c c c||}
\hline
$s$ & $50$ & $100$ & $200$ & $500$ & $1000$ \\
\hline
Times of success & $1000$ & $1000$ & $1000$ & $1000$ & $1000$ \\
\hline
Rates of success & $100\%$ & $100\%$ & $100\%$ & $100\%$ & $100\%$ \\
\hline
\end{tabular}
\label{tab:robustness2}
\end{table}

\begin{table}
\caption{Robustness of RCLUPPr in the ill-conditioned cases with different $\kappa_{2}(X)$}
\centering
\begin{tabular}{||c c c c c c||}
\hline
$\kappa_{2}(X)$ & $2.04 \cdot 10^{17}$ & $1.93 \cdot 10^{22}$ & $1.87 \cdot 10^{27}$ & $1.84 \cdot 10^{32}$ & $1.82 \cdot 10^{37}$\\
\hline
Times of success & $1000$ & $1000$ & $773$ & $761$ & $764$ \\
\hline
Rates of success & $100\%$ & $100\%$ & $77.3\%$ & $76.1\%$ & $76.4\%$ \\
\hline
\end{tabular}
\label{tab:robustness3}
\end{table}

\begin{table}
\caption{Robustness of RCLUPPr in the ill-conditioned cases with different $s$}
\centering
\begin{tabular}{||c c c c c c||}
\hline
$s$ & $50$ & $100$ & $200$ & $500$ & $1000$ \\
\hline
Times of success & $779$ & $764$ & $750$ & $766$ & $749$ \\
\hline
Rates of success & $77.9\%$ & $76.4\%$ & $75.0\%$ & $76.6\%$ & $74.9\%$ \\
\hline
\end{tabular}
\label{tab:robustness4}
\end{table}

Table~\ref{tab:robustness1} and Table~\ref{tab:robustness2} demonstrate that RCLUPPr is highly robust in the well-conditioned cases, maintaining numerical stability across thousands of independent trials. For the severely ill-conditioned matrices, Table~\ref{tab:robustness3} shows that numerical breakdown may occur when $\kappa_2(X)$ becomes sufficiently large. Nevertheless, RCLUPPr still achieves a success rate of approximately $75\%$, which substantially exceeds the theoretical lower bound of probability $1-p=0.4$ guaranteed by Definition~\ref{definition 2} and Theorem~\ref{theorem:RCLUPPr}. Table~\ref{tab:robustness4} further indicates that the choice of $s$ for the Gaussian sketch has negligible impact on the robustness in the ill-conditioned regime. Consequently, a small $s=\mathcal{O}(n)$, or even $s=n$, can be safely adopted to improve the efficiency of RCLUPPr without sacrificing the reliability.

\subsection{Accelerating RCLUPPr in the double precision}
In this part, we discuss some practical acceleration strategies for RCLUPPr in the double precision. As shown in \cite[Section~6.3]{RCLUPP}, although RCLUPPr offers superior applicability and accuracy, its computational efficiency lags behind other CholeskyQR-type algorithms. Here, we introduce several targeted optimizations that significantly improve the CPU time (s) while fully preserving the numerical advantages of RCLUPPr.

\subsubsection{Multi-sketching}
For $X \in \mathbb{R}^{m\times n}$, we can use the technique of multi-sketching in \cite{Householder} to replace a single Gaussian sketch. We can use a step of the CountSketch \cite{Low} before the Gaussian sketch to reduce the computational complexity of matrix sketching. When $\Omega_{1} \in \mathbb{R}^{s_{1}\times m}$ is the sketch matrix of the CountSketch with $s_{1} \le m$, according to \cite{frequent, tool}, $s_{1}$ satisfies
\begin{equation}
s \ge \frac{n^{2}+n}{\epsilon^{2}p}. \label{eq:multi}
\end{equation}
With $\Omega \in \mathbb{R}^{s\times s_{1}}$ as the sketch matrix of the Gaussian sketch satisfying $n \le s \le s_{1} \le m$, the comparison between the computational complexity of different types of sketching is shown in Table~\ref{tab:comparisons}.

Table~\ref{tab:comparisons} indicates that multi-sketching becomes cheaper than the single Gaussian sketching when $s(m-s_{1}) \ge m$. For the typical choice $s=\mathcal{O}(n)$ for the Gaussian sketch, \eqref{eq:multi} yields the sufficient condition
\begin{equation}
\frac{n^{2}+n}{\epsilon^{2}p} \le m-\mathcal{O}(\frac{m}{n}), \label{eq:multie}
\end{equation}
which is satisfied whenever $m=an^{2}$ for a positive constant $a$. Accordingly, we retain $\epsilon=0.5$ and $p=0.6$ in the experiments. We generate a test matrix $X\in\mathbb{R}^{20000\times 20}$ using $\mbox{rand}$ in MATLAB and fix $s_{1}$ for the CountSketch as $s_{1}=2800$. Varying $s$ for the Gaussian sketch, we compare the CPU time (s) of RCLUPPr with and without multi-sketching. The results are summarized in Table~\ref{tab:CPU1}. The table clearly demonstrates the effectiveness of multi-sketching in reducing the CPU time (s) once \eqref{eq:multie} still holds. As $s$ increases, RCLUPPr with multi-sketching exhibits a substantial speed advantage over the case with the single Gaussian sketch, which is consistent with the analysis of computational complexity in Table~\ref{tab:comparisons}. The rounding error analysis of RCLUPPr with multi-sketching can be taken following the same lines as in \cite{Householder}.

\begin{table}
\caption{Computational complexity between different sketching for $X \in \mathbb{R}^{m\times n}$}
\centering
\begin{tabular}{||c c||}
\hline
$\mbox{Type of sketching}$ & $\mbox{Computational complexity}$ \\
\hline
$\mbox{Single-sketching}$ & $\mathcal{O}(smn)$ \\
\hline
$\mbox{Multi-sketching}$ & $\mathcal{O}(mn+ss_{1}n)$ \\
\hline
\end{tabular}
\label{tab:comparisons}
\end{table}

\begin{table}
\caption{Comparison of the CPU time (s) of RCLUPPr with different  matrix sketching}
\centering
\begin{tabular}{||c c c c c c||}
\hline
$s$ & $100$ & $200$ & $400$ & $800$ & $1600$ \\
\hline
Single-sketching & $0.0432$ & $0.0456$ & $0.0492$ & $0.0593$ & $0.0802$ \\
\hline
Multi-sketching & $0.0425$ & $0.0431$ & $0.0453$ & $0.0455$ & $0.0463$ \\
\hline
Reduction of the CPU time & $1.7\%$ & $5.8\%$ & $8.6\%$ & $30.3\%$ & $42.3\%$ \\
\hline
\end{tabular}
\label{tab:CPU1}
\end{table}

\subsubsection{Adaptive RCLUPPr}
We also investigate practical variants of RCLUPPr to further improve the efficiency. RCLUPPr with multi-sketching remains applicable for $X\in\mathbb{R}^{m\times n}$ whenever $m=\mathcal{O}(n^2)$, see \eqref{eq:multie}. Here, we consider performing LU factorization without partial pivoting, which is significantly faster in many cases. Replacing the line $[L,U,P]=\mathrm{LU}(X)$ with $[L,U]=\mathrm{LU}(X)$ in Algorithm~\ref{alg:RCLUPPr} yields the variant RCLUr, see Algorithm~\ref{alg:RCLUr}. To combine the high robustness of RCLUPPr with the speed of RCLUr, we propose an adaptive strategy called Adaptive RCLUPPr in Algorithm~\ref{alg:adaptive}). It attempts the faster RCLUr in the beginning. Only upon detecting numerical breakdown (NaN/Inf) or when the tolerances of the orthogonality and residual are violated does it fall back to RCLUPPr. This hybrid approach achieves near-optimal runtime for the well-conditioned matrices while automatically guaranteeing the robustness for the ill-conditioned cases without requiring prior knowledge of the matrix properties. 

For the numerical experiments, we use the SVD-based matrix with $m=2000$, $n=50$, $\norm{X}_{2}=1$ and $\kappa_{2}(X)=10^{16}$. We set $\tau_{ortho}=\tau_{res}=10^{-15}$. Table~\ref{tab:CPU2} compares the CPU time (s) of RCLUPPr and Adaptive RCLUPPr in this example. We find that Adaptive RCLUPPr is substantially faster than RCLUPPr, illustrating that it is a practical and efficient option for many ill-conditioned problems.

\begin{algorithm}
\caption{$[Q,R]=\mbox{RCLUr}(X)$}
\label{alg:RCLUr}
\begin{algorithmic}[1]
\STATE $[L,U]=\mbox{LU}(X),$
\STATE The remaining steps are the same as steps $2-7$ of RCLUPPr in Algorithm~\ref{alg:RCLUPPr}.
\end{algorithmic}
\end{algorithm}%

\begin{algorithm}
\caption{Adaptive RCLUPPr}
\label{alg:adaptive}
\begin{algorithmic}[1]
\STATE \textbf{Phase 1: run RCLUr without partial pivoting}
\STATE Compute $\text{ortho} \gets \norm{Q^{\top}Q-I_{n}}_{F}$ and $\text{res} \gets \norm{QR-X}_{F}$
\IF{any entry in $Q$ or $R$ is NaN or Inf \textbf{or} $\text{ortho} > \tau_{ortho}$ \textbf{or} $\text{res} > \tau_{res} \cdot \norm{X}_{F}$}
    \STATE \textbf{Phase 2: run RCLUPPr with partial pivoting}
\ENDIF
\end{algorithmic}
\end{algorithm}

\begin{table}
\caption{Comparison of the CPU time (s) between RCLUPPr and Adaptive RCLUPPr}
\centering
\begin{tabular}{||c c c c c c||}
\hline
$s$ & $50$ & $100$ & $200$ & $400$ & $800$\\
\hline
RCLUPPr & $0.0065$ & $0.0074$ & $0.0084$ & $0.0093$ & $0.0112$\\
\hline
Adaptive RCLUPPr & $0.0032$ & $0.0038$ & $0.0048$ & $0.0059$ & $0.0063$ \\
\hline
Reduction of the CPU time & $50.8\%$ & $48.6\%$ & $42.9\%$ & $36.7\%$ & $43.8\%$ \\
\hline
\end{tabular}
\label{tab:CPU2}
\end{table}

\section{Conclusions}
\label{sec:conclusions}
This work provides the first comprehensive rounding error analysis of RCLUPPr, a novel randomized CholeskyQR-type algorithm with direct LUPP factorization. We rigorously prove that performing LUPP factorization before matrix sketching yields better applicability compared with the existing algorithms. The algorithm is also shown to be numerically stable and accurate through detailed rounding error analysis. We further demonstrate its robustness through extensive experiments on the matrices from the real-world problems. Some practical strategies for accelerating RCLUPPr, including multi-sketching and Adaptive RCLUPPr based on LUPP factorization, significantly improve the CPU time (s) while preserving all the theoretical advantages in the real implementations. Numerical results on the matrices from the optimal control, capacitively coupled plasma simulations, and the control theory confirm the effectiveness and practicality of RCLUPPr.

\section*{Acknowledgement}
\label{sec:acknowledgement}
The authors declare that they have no conflict of interest. Haoran Guan and Zhenyu Zou are the co-first authors of this article with the dagger symbol (\textsuperscript{\textdagger}).‌‌

\bibliography{references}

\end{document}